\newcommand{\R}{\mathbb{R}}
\newtheorem{theorem}{Theorem}[section]
\newtheorem{proposition}{Proposition}[section]
\newtheorem{lemma}{Lemma}[section]
\newtheorem{corollary}{Corollary}[section]
\newtheorem{remark}{Remark}[section]
\def\sign{\text{sgn}\,}
\def\supp{\text{supp}\,}
\newcommand{\p}{\partial}
\newcommand{\bb}{\begin{equation}}
\newcommand{\ee}{\end{equation}}
\newcommand{\ba}{\begin{array}}
\newcommand{\ea}{\end{array}}
\newcommand{\f}{\frac}
\newcommand{\ds}{\displaystyle}
\newcommand{\al}{\alpha}
\numberwithin{equation}{section}
\title{Conserved quantities, continuation and compactly supported solutions of some shallow water models} 
\author{Igor Leite Freire}
\affil{Centro de Matem\'atica, Computa\c{c}\~ao e Cogni\c c\~ao,\\ Universidade Federal do ABC,\\ Avenida dos Estados, $5001$, Bairro Bangu,\\
$09.210-580$, Santo Andr\'e, SP - Brasil\\
  \texttt{igor.freire@ufabc.edu.br} \\
  \texttt{igor.leite.freire@gmail.com}}
\begin{document}
\maketitle
\begin{abstract}
\centering\begin{minipage}{\dimexpr\paperwidth-9cm}
\textbf{Abstract:} 
A proof that strong solutions of the Dullin-Gottwald-Holm equation vanishing on an open set of the (1+1) space-time are identically zero is presented. In order to do it, we use a geometrical approach based on the conserved quantities of the equation to prove a unique continuation result for its solutions. We show that this idea can be applied to a large class of equations of the Camassa-Holm type.
\vspace{0.1cm}

\textbf{2010 AMS Mathematics Classification numbers}: 35Q35, 35Q51, 37K10.

\textbf{Keywords:} Conserved quantities; Compactly supported solutions; Unique continuation; Shallow water models.

\end{minipage}
\end{abstract}
\bigskip
\newpage
\tableofcontents
\newpage

\section{Introduction}

Among the myriad of models describing shallow water waves, two equations are among the most prominent: The Korteweg-de Vries (KdV) \cite{kdv} and the Camassa-Holm (CH) \cite{chprl} equations. Also, they are prototype equations in the field of integrable systems and have an extensive list of nice mathematical properties, such as: Their solutions are well-posed in the Hadamard sense \cite{const1998-1, const1998-2, const1998-3,const2000-1,const2000-2, blanco,kato,katoma}; their solutions describe surfaces with constant curvature \cite{chern,reyes2002}; and they also have algebraic properties \cite{strachan,novikov}. See \cite{raspajde} for a wide discussion about the properties of the CH and KdV equations.

Although the KdV equation was derived more than a century ago \cite{kdv}, from the point of view of mathematical modelling, it fails miserably at describing wave breaking in shallow water \cite[page 476]{whi}, which is one of the most common and interesting phenomena in the studies of water waves. On the other hand, the CH equation not only seems to be a more realistic model to describe waves in water models, at least from the point of view of blow-up of its solutions, see \cite{const1998-1,const1998-2,const1998-3,const2000-1,escher}, but also it admits richer types of travelling waves than the KdV equation, see \cite{raspa} and references therein. 

It is nowadays well known that if a non-trivial initial data ({\it i.e}, a non-identically vanishing function) of the CH equation is compactly supported, then the corresponding solution loses this property instantly. This was firstly shown to strong solutions of the CH equation by Constantin \cite{constjmp} and subsequently by Henry \cite{henry}. Furthermore, it emerges from their results that if a strong solution of the Camassa-Holm equation is compactly supported at some $t>0$, then it necessarily must be identically null.

Later, in \cite{himcmp} it was shown that any non-trivial strong solution of the CH equation, corresponding to an initial data with fast decay at infinity, behaves asymptotically as a peakon solution for each fixed $t>0$. Moreover, non-trivial compactly supported data with enough regularity gives rise to strong solutions blowing-up at finite time, see \cite[Corollary 1.1]{himcmp}. More recently, Linares and Ponce \cite{linares} proved a unique continuation result for the CH equation, which says that if a solution of the CH equation vanishes on a non-empty open set, then it is null everywhere.

Let us introduce the object of investigation in the present work: the Dullin-Gottwald-Holm (DGH) equation 
\bb\label{1.0.1}
u_t-\al^2u_{txx}+2\omega u_x+3uu_x+\gamma u_{xxx}=\al^2(2u_xu_{xx}+uu_{xxx}),
\ee
which comes from the physics of fluids and is named after the work by Dullin, Gottwald and Holm \cite{dgh}. Above, $\al,\, \omega$ and $\gamma$ are parameters related to the the physical derivation of the model, see \cite{chprl,dgh}. We assume that $\al$ is non-negative, although the paper is mostly concerned with $\al>0$. Note that the KdV and CH equations are obtained from \eqref{1.0.1} by taking $\al=0$ and $\gamma=0$, respectively.

We observe that we can rewrite the DGH equation as
$$
m_t+2\omega u_x+um_x+2u_xm+\gamma u_{xxx}=0,
$$
where $m:=u-\al^2u_{xx}$ is known as momentum \cite{chprl,dgh}. In \eqref{1.0.1}, and consequently, in the equation above, the variables $t$ and $x$ are regarded as time and space in view of the physical meaning behind the equation and we shall maintain this terminology and meaning in the manuscript. 

One advantage of the last representation of the DGH equation is that we can realise one of its most utterly important facts: it is bi-Hamiltonian, that is, it can be written in two compatible Hamiltonian forms \cite{dgh} (see \cite[Chapter 7]{olverbook} for further details about bi-Hamiltonian systems)
$$
m_t=-B_2\f{\delta {\cal H}_1}{\delta m}=-B_1\f{\delta {\cal H}_2}{\delta m},
$$
where
$$
B_1=\p_x-\al^2\p_x^3,\quad B_2=\p_x(m\cdot)+m\p_x(\cdot)+2\omega\p_x(\cdot)+\gamma \p_x^3(\cdot),
$$
$\p_x(m\cdot)f:=\p_x(mf)$, etc, and
$$
{\cal H}_1=\int_{\R}(u^2+\al^2u_x^2)dx,\quad {\cal H}_2=\int_\R(u^3+\al^2u_x^3+2\omega u^2-\gamma u_x^2)dx.
$$

The quantities ${\cal H}_1$ and ${\cal H}_2$ above are conserved quantities for the DGH equation, meaning that they are invariant with respect to time and, in particular, as long as $\al\neq0$, the quantity ${\cal H}_1$ mathematically corresponds to the square of the Sobolev norm\footnote{Formally speaking, a weighted Sobolev norm for $\al>0$ and $\al\neq1$.} $\|\cdot\|_{H^1(\R)}$ of the solutions of the equation, and some of its derivatives,  decaying to $0$ as $|x|\rightarrow\infty$.

The invariance of the Sobolev norm is a key ingredient for proving several qualitative results of the solutions of both CH and DGH equations. Moreover, the latter equation shares with the former the fact that they have local well-posed solutions in the Hadamard's sense in Sobolev spaces, at least whenever certain relations between the coefficients $\omega$ and $\gamma$ are satisfied, see \cite{liu,tiancmp2005,zhoujfa2007,zhou} and  \cite{mustafa,zhoujfa2007,zhou} for the real and periodic cases, respectively.

Still about similarities, the solutions of the DGH equation also provide metrics for surfaces with constant curvature \cite[Theorem 4]{raspaspam}. On the other hand, as far as the author knows, none of the results proved in \cite{constjmp,henry,linares} have been investigated for the DGH equation.

The first motivation of this work was the investigation of whether non-trivial (strong) solutions of the DGH equation might be compactly supported. In this sense our goal was originally the extension of the results due to Constantin \cite{constjmp} and Henry \cite{henry} to the DGH equation.

In the course of the investigation mentioned above, the author discovered the work by Linares and Ponce \cite{linares}. This brought to the original investigation the following question: Would it be possible to extend to the DGH equation a similar result? Of course the results proved in \cite{linares} gave a strong indication that the answer would be positive. However, what is reported in this work goes far beyond a mere application or adaption of a established result to a different equation. What is really presented here is a different framework for proving unique continuation for solutions of non-local evolution equations satisfying certain conditions.

The main difference between what it is done in the present work and that made in \cite{linares} is the use of conserved quantities: Both CH and DGH equations are models coming from Physics and, therefore, some of their conserved quantities are physically relevant. For example, the conservation of the Sobolev norm corresponds to the conservation of energy of the model. Also, these equations themselves are conservation laws, which implies that the integral of a non-negative solution $u$ over the whole domain corresponds to the mass conservation. These observations are used to develop a different framework for investigating unique continuation properties of solutions of the DGH equation and related models.

Our main argument is quite simple and comes from Physics: If the mass or energy of a conservative physical system vanishes at some time $t$, then it must necessarily vanish at all and, therefore, we do not have the occurrence of the phenomenon. Mathematically speaking, this means that the solutions of the equations describing the physical process, in our case, the DGH equation, must necessarily vanish.

In the next section it is presented the main results of this paper and its outline.

\section{Main results of the paper and its outline}\label{sec2}

In this section the main results of the paper and its outline are presented, as well as some comments showing how they are inserted in the state of the art in the field. We also discuss the novelty of the work and its innovations.

As said before, the main focus of this work is the equation \eqref{1.0.1} with $\al\neq0$. Under a suitable scaling of the variables, and eventually renaming the constants, we can choose $\al=1$ in \eqref{1.0.1} and this is henceforth assumed throughout this paper, but for Section \ref{sec6}. Some of our results are concerned with the Cauchy problem on the real line
\bb\label{1.0.2}
\left\{
\ba{ll}
m_t+2\omega u_x+um_x+2u_xm=-\gamma u_{xxx},&t>0,\,\,x\in\R,\\
\\
u(0,x)=u_0(x),& x\in\R,
\ea
\right.
\ee
where $m:=u-u_{xx}$.

The initial value problem \eqref{1.0.2} is locally  well-posed provided that the initial data $u_0\in H^s(\R)$, with $s\geq 3/2$, see \cite[Proposition 2.1]{tiancmp2005}. If we add the condition $m_0+\omega+\gamma/2\geq0$, where $m_0:=u_0-u_0''$, not only we have $\|u_x\|_{L^\infty}<K$, for a certain positive constant $K$, but we also extend the local well-posedness to global. In \cite[Theorem 2.2]{liu}, the conditions on $m_0$ are relaxed, but the price payed is the imposition of the condition $\omega+\gamma/2=0$. The blow-up of the solutions of \eqref{1.0.2} was also investigated in \cite[Theorem 2.3]{tiancmp2005} and \cite[Theorem 3.1]{liu}. See also \cite{yin} for further results about the DGH equation in the real line.

The comments above support the view that the constants $\omega$ and $\gamma$ in \eqref{1.0.2} are relevant in the qualitative analysis of solutions of the problem \eqref{1.0.2}.

As earlier mentioned, one of our purposes in this work is to enlighten whether the solutions of the DGH equation are compactly supported and our first result in this direction is:

\begin{theorem}\label{teo1.1}
Let $u_0\in H^4(\R)$ be an initial data of the problem \eqref{1.0.2}, with corresponding solution $u$. If $m_0(x)+\omega+\gamma/2$ has compact support, then $m(t,\cdot)+\omega+\gamma/2$ has the same property, for every $t>0$ belonging to the interval of existence of the solution. 
\end{theorem}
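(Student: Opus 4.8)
The plan is to transform the Cauchy problem \eqref{1.0.2} into an equation for $m$ along the characteristics generated by $u$, and then to propagate the support information through the resulting transport-type ODE. Set $v := m + \omega + \gamma/2$; since $\omega$ and $\gamma$ are constants, the equation $m_t + 2\omega u_x + u m_x + 2 u_x m = -\gamma u_{xxx}$ can be rewritten, using $u_{xxx} = u_x - m_x$, as a linear transport equation for $v$ with a zeroth-order term:
\begin{equation}\label{eq:transport-v}
v_t + u v_x + 2 u_x v = 0.
\end{equation}
Indeed, $m_t + u m_x + 2 u_x m + 2\omega u_x + \gamma u_x - \gamma m_x = v_t + u v_x + 2 u_x v$ after grouping, since $v_x = m_x$, $v_t = m_t$, and the constant shift contributes $2 u_x(\omega + \gamma/2) + \text{(the }\gamma u_x\text{ term reorganised)}$; I would carry out this short computation carefully to confirm the precise coefficient, but the upshot is an equation of the schematic form \eqref{eq:transport-v}.

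Next I would introduce the flow $q(t,x)$ defined by $\partial_t q(t,x) = u(t, q(t,x))$, $q(0,x) = x$. Since $u_0 \in H^4(\R)$ gives, by the local well-posedness cited from \cite{tiancmp2005}, a solution $u \in C([0,T); H^4) \cap C^1([0,T); H^3)$, the map $x \mapsto q(t,x)$ is a $C^1$-diffeomorphism of $\R$ for each $t$ in the existence interval, with $q_x(t,x) > 0$. Differentiating \eqref{eq:transport-v} along characteristics, one obtains $\frac{d}{dt}\big( v(t,q(t,x)) q_x(t,x)^2 \big) = 0$, so that
\begin{equation}\label{eq:conservation-along-char}
v(t, q(t,x))\, q_x(t,x)^2 = v_0(x),
\end{equation}
where $v_0 = m_0 + \omega + \gamma/2$. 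The identity $\frac{d}{dt} q_x = u_x(t,q) q_x$ combined with \eqref{eq:transport-v} gives this immediately.

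From \eqref{eq:conservation-along-char} the conclusion is essentially automatic: if $v_0(x) = 0$ for $x$ outside a compact set $[a,b]$, then $v(t,q(t,x)) = 0$ for those same $x$, i.e. $v(t,\cdot)$ vanishes outside $q(t, [a,b])$, which is again a compact interval because $q(t,\cdot)$ is a homeomorphism. Hence $m(t,\cdot) + \omega + \gamma/2$ has compact support for every $t$ in the existence interval. The main technical point to be careful about — and the step I expect to require the most attention — is the regularity bookkeeping: one must make sure $u \in H^4$ provides enough smoothness that $m = u - u_{xx} \in H^2 \subset C^1$, that the characteristic ODE has a well-defined $C^1$ flow with strictly positive Jacobian on the whole existence interval (this uses $\|u_x\|_{L^\infty} < \infty$, which follows from Sobolev embedding), and that \eqref{eq:transport-v} holds in a classical pointwise sense rather than merely distributionally, so that differentiation along characteristics is legitimate. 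The algebraic rearrangement producing \eqref{eq:transport-v} and \eqref{eq:conservation-along-char} is routine once the setup is in place.
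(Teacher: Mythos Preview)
Your approach is essentially identical to the paper's, which simply invokes a known identity (Lemma~\ref{lema2.2}, cited from \cite{liu,tiancmp2005}) stating that $m_0(x)+\omega+\gamma/2=(m(t,q(t,x))+\omega+\gamma/2)\,q_x(t,x)^2$ along the flow $\dot q=u(t,q)-\gamma$, and then reads off the support conclusion exactly as you do. The only correction---already anticipated by your caveat---is that the transport speed is $u-\gamma$, not $u$: carrying out the rearrangement yields $v_t+(u-\gamma)v_x+2u_xv=0$, so the characteristic ODE should be $\partial_t q=u(t,q)-\gamma$, matching Lemma~\ref{lema2.2}.
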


The analogous of Theorem \ref{teo1.1} in the CH case is obtained when $\omega=\gamma=0$, see \cite[Proposition 1]{henry} and \cite[Theorem in section II]{constjmp}. It is worth mentioning that the result of Theorem \ref{teo1.1} is in line with the preceding observations, in the sense that the quantity $\omega+\gamma/2$ plays great relevance in our analysis. 

Theorem \ref{teo1.1} is proved in Section \ref{sec3}, where we also revisit some relevant facts about the DGH equation to set the wheels in motion for proving some technical results that will be used throughout the work.

The second result of this work can now be formulated:
\begin{theorem}\label{teo1.2}
Let $u_0\in H^{4}(\R)$ be a given initial data of the Cauchy problem \eqref{1.0.2}, $u$ its corresponding solution and $T$ its lifespan. The following affirmations are equivalent:
\begin{enumerate}
    \item For each $t\in[0,T)$, the function $x\mapsto u(t,x)+\omega+\gamma/2$ has compact support;
    \item $u\equiv0$ and $\omega+\gamma/2=0$;
    \item There exists a non-empty open set $\Omega\subseteq[0,T)\times\R$ such that $u(t,x)=0$, for all $(t,x)\in\Omega$, and $\omega+\gamma/2=0$.
\end{enumerate}
\end{theorem}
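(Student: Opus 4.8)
The strategy is to make statement $(2)$ the hub of the equivalence. The implications $(2)\Rightarrow(1)$ and $(2)\Rightarrow(3)$ are immediate — the zero function has compact support, and one may take $\Omega=[0,T)\times\R$ — so everything reduces to proving $(1)\Rightarrow(2)$ and $(3)\Rightarrow(2)$. Throughout I will use the representation $u=p*m$ with $p(x)=\frac12e^{-|x|}$ and the two elementary kernel identities
\begin{equation}\label{kernelid}
e^{-x}(u+u_x)(t,x)=\int_x^{\infty}e^{-y}m(t,y)\,dy,\qquad e^{x}(u-u_x)(t,x)=\int_{-\infty}^{x}e^{y}m(t,y)\,dy,
\end{equation}
together with the observation that, when $\omega+\gamma/2=0$, substituting $u_{xxx}=u_x-m_x$ turns the momentum equation in \eqref{1.0.2} into the Camassa--Holm transport form $m_t+(u+2\omega)m_x+2u_xm=0$.

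For $(1)\Rightarrow(2)$: for each fixed $t$ the function $u(t,\cdot)\in H^4(\R)$ tends to $0$ at $\pm\infty$, so compactness of the support of $x\mapsto u(t,x)+\omega+\gamma/2$ forces the constant $\omega+\gamma/2$ to vanish; hence $u(t,\cdot)$, and therefore $m(t,\cdot)$ (whose support lies in that of $u(t,\cdot)$; this is also consistent with Theorem~\ref{teo1.1}), is compactly supported for every $t\in[0,T)$. By \eqref{kernelid}, $u(t,x)=\frac12e^{-x}J(t)$ to the right of $\supp m(t,\cdot)$, where $J(t):=\int_\R e^{y}m(t,y)\,dy$; since $u(t,\cdot)$ has compact support this forces $J(t)=0$ for every $t$. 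I would then differentiate $J$ using $m_t+(u+2\omega)m_x+2u_xm=0$, integrate by parts (all boundary terms vanishing because $m(t,\cdot)$ is compactly supported), and invoke \eqref{kernelid} to obtain an identity of the shape $\dot J(t)=\frac12\int_\R e^{y}(u-u_x)^2(t,y)\,dy+2\omega J(t)$. As $J\equiv0$, the surviving integral vanishes, so $u(t,\cdot)$ solves $u-u_x=0$ on $\R$, i.e. $u(t,y)=C(t)e^{y}$, and compact support forces $C(t)\equiv0$, which is $(2)$.

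For $(3)\Rightarrow(2)$ — the substantive direction — fix $(t_0,x_0)\in\Omega$ and $a,b>0$ with $Q:=(t_0-a,t_0+a)\times(x_0-b,x_0+b)\subseteq\Omega$ (one-sided in $t$ if $t_0=0$); note that $\omega+\gamma/2=0$ is part of the hypothesis. On $Q$ we have $u\equiv u_x\equiv u_{xx}\equiv m\equiv0$, and by continuity $m(t,x_0\pm b)=0$ for $t\in(t_0-a,t_0+a)$. Evaluating the first identity in \eqref{kernelid} at points of $Q$ and using $m(t,\cdot)=0$ on $(x_0-b,x_0+b)$ shows that the truncated moment $\Phi(t):=\int_{x_0+b}^{\infty}e^{-y}m(t,y)\,dy$ vanishes on $(t_0-a,t_0+a)$, and likewise $\Psi(t):=\int_{-\infty}^{x_0-b}e^{y}m(t,y)\,dy$ vanishes there. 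Differentiating $\Phi$ with the momentum equation, integrating by parts — the boundary term at $x_0+b$ dropping because $m$ vanishes there, the one at $+\infty$ because $\left|\int_y^{\infty}e^{-z}m(t,z)\,dz\right|\le\|m(t,\cdot)\|_{L^\infty}e^{-y}$ — and using \eqref{kernelid}, I would reach $\dot\Phi(t)=-\frac12\int_{x_0+b}^{\infty}e^{-y}(u+u_x)^2(t,y)\,dy-2\omega\Phi(t)$, and symmetrically $\dot\Psi(t)=\frac12\int_{-\infty}^{x_0-b}e^{y}(u-u_x)^2(t,y)\,dy+2\omega\Psi(t)$. Since $\Phi\equiv\Psi\equiv0$ on the time window, both definite-sign integrals vanish, so $(u+u_x)(t,\cdot)\equiv0$ on $[x_0+b,\infty)$ and $(u-u_x)(t,\cdot)\equiv0$ on $(-\infty,x_0-b]$; as $u(t,x_0\pm b)=0$ by continuity, solving these first-order ODEs yields $u(t,\cdot)\equiv0$ on $[x_0+b,\infty)\cup(-\infty,x_0-b]$, and together with $u=0$ on $(x_0-b,x_0+b)$ we get $u(t,\cdot)\equiv0$ on $\R$ for every $t$ in the window. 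Finally, the conserved quantity $\mathcal{H}_1=\|u(t,\cdot)\|_{H^1(\R)}^2$ then vanishes on a nondegenerate time interval, hence on all of $[0,T)$, so $u\equiv0$; with $\omega+\gamma/2=0$ this is $(2)$.

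The main obstacle is the pair of differential identities for the truncated moments $\Phi$ and $\Psi$: one must justify differentiating under the integral sign and the integrations by parts, verifying that the boundary contributions vanish both at the truncation point (because the momentum density is zero there) and at $\pm\infty$ (because of the competition between the exponential weight and the $H^2(\R)$-decay of $m(t,\cdot)$, the key estimate being the weighted bound above and its mirror image), and — most importantly — that after the cancellations the remaining integral has a definite sign, which is exactly what allows $\Phi\equiv0$ to propagate into $m\equiv0$ on a half-line. The structural facts making the signs work out are the reduction $\omega+\gamma/2=0$ (which puts the momentum equation in transport form and is forced in case $(1)$ by the decay of $u$) and the kernel identities \eqref{kernelid}; once $u$ is known to vanish on a time interval, the conservation of the $H^1$-energy $\mathcal{H}_1$ — the ``physical'' input emphasised in the introduction — upgrades this at once to $u\equiv0$ on the whole lifespan.
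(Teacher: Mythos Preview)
Your argument is correct, and it differs from the paper's in an interesting way, mainly in the direction $(3)\Rightarrow(2)$.

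For $(1)\Rightarrow(2)$ both you and the paper use exponential moments of the momentum. The paper keeps $\omega+\gamma/2$ in play, works with $\tilde m=m+\omega+\gamma/2$, and shows directly that $\frac{d}{dt}\int e^{\pm x}\tilde m\,dx=\pm\int e^{\pm x}\bigl[(u+\omega+\gamma/2)^2+u_x^2/2\bigr]dx$, the left-hand side vanishing by Proposition~\ref{prop2.1}; this forces $u\equiv -(\omega+\gamma/2)$ and then $\omega+\gamma/2=0$ by decay. You instead dispose of the constant first (by decay of $u$) and then run the same moment computation in the CH-transport form, your integrand $\tfrac12(u-u_x)^2$ differing from $u^2+u_x^2/2$ only by a total derivative against the exponential weight. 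These are two arrangements of the same mechanism.

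For $(3)\Rightarrow(2)$ the routes genuinely diverge. The paper passes to the non-local form \eqref{7.1} and, following Linares--Ponce, studies the single-time function $F(x)=\p_x\Lambda^{-2}\bigl(u^2+u_x^2/2\bigr)(t_0,x)$: the PDE forces $F\equiv0$ on $[a,b]$, and then the explicit sign of the kernel $S(y)$ in Proposition~\ref{prop3.2} makes $F(b)-F(a)=\tfrac12\int S(y)f(y)\,dy$ strictly positive unless $f\equiv0$. Your argument stays in the momentum picture and is temporal rather than spatial: you introduce the truncated moments $\Phi,\Psi$, show they vanish on the time window by the kernel identities, and then \emph{differentiate in time} to obtain sign-definite integrals of $(u\pm u_x)^2$ on the two half-lines, which forces $u(t,\cdot)\equiv0$ there via the first-order ODEs. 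This is closer in spirit to the Constantin--Henry treatment of compactly supported data, adapted here to a half-line with the vanishing of $m$ at the truncation point supplying the missing boundary term. Both arguments close with the conservation of ${\cal H}_1=\|u(t,\cdot)\|_{H^1}^2$, which is the paper's emphasised ``physical'' step.

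What each approach buys: the paper's route makes the non-local structure do the work at a fixed time and generalises cleanly to the abstract template \eqref{8.1} (and to the periodic case, where the Green kernel changes); your route unifies $(1)\Rightarrow(2)$ and $(3)\Rightarrow(2)$ under a single exponential-moment calculation and avoids introducing $\Lambda^{-2}$ altogether, at the cost of the integrability and boundary-term checks you flag (all of which go through here because $u_0\in H^4$ gives $m\in C([0,T);H^2)\cap C^1([0,T);H^1)$).
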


We observe that if $u_0$ is non-trivial and compactly supported, then necessarily $u(t,\cdot)$ is not compactly supported, for each $t>0$. If this happens, Theorem \ref{teo1.2} combined with \cite[Corollary 1.1]{himcmp} prove the following immediate consequence:
\begin{corollary}\label{cor}
Assume that $\gamma=-2\omega$. If for some $T>0$ and $s\geq 4$ a function $u\in C([0,T];H^s(\R))$ is a solution of \eqref{1.0.2} such that $u_0$ is non-trivial and compactly supported, then there exists $T^\ast\in(0,T)$ such that $u\in C([0,T^\ast);H^s(\R))$ and
$$
\int_0^{T^\ast}\|u_x(t,x)\|_{L^\infty(\R)}dt=\infty.
$$
\end{corollary}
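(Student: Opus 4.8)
The hypothesis $\gamma=-2\omega$ is exactly the relation $\omega+\gamma/2=0$, and — this is the whole point — it is this relation, rather than the mere compactness of $\supp m_0$, that conjugates the Cauchy problem \eqref{1.0.2} to the Camassa--Holm Cauchy problem by a Galilean change of the space variable. So the plan is: first reduce \eqref{1.0.2} to the Camassa--Holm equation; then transport the hypotheses on $u_0$ across the reduction; then quote the finite-time blow-up result \cite[Corollary 1.1]{himcmp} for Camassa--Holm; and finally translate that blow-up back into the statement for $u$. Theorems \ref{teo1.1} and \ref{teo1.2} are invoked to certify that we are genuinely in the hypotheses of \cite[Corollary 1.1]{himcmp}: with $\omega+\gamma/2=0$ Theorem \ref{teo1.1} keeps the momentum compactly supported throughout the lifespan, while the equivalence $(1)\Leftrightarrow(2)$ of Theorem \ref{teo1.2} forbids $u(t,\cdot)$ itself from staying compactly supported unless $u\equiv0$ — so, $u_0$ being non-trivial, the persistence/continuation mechanism can only break down through a loss of regularity, i.e.\ blow-up.

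\textbf{Step 1 (reduction).} With $\gamma=-2\omega$ one has $2\omega u_x+\gamma u_{xxx}=2\omega(u_x-u_{xxx})=2\omega m_x$, so the momentum equation in \eqref{1.0.2} becomes
\bb\label{eqch0}
m_t+(u+2\omega)m_x+2u_xm=0,\qquad m=u-u_{xx}.
\ee
Setting $v(t,x):=u(t,x+2\omega t)$ one has $v_x=u_x$, $v_{xx}=u_{xx}$, hence $n:=v-v_{xx}$ equals $m$ read along $x\mapsto x+2\omega t$, and inserting $\p_tv=u_t+2\omega u_x$, $\p_tn=m_t+2\omega m_x$ into \eqref{eqch0} turns it into the Camassa--Holm equation $n_t+vn_x+2v_xn=0$. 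Moreover $v(0,\cdot)=u_0$; for each fixed $t$ the map $u(t,\cdot)\mapsto v(t,\cdot)$ is the translation by $2\omega t$, hence an isometry of every $H^s(\R)$, continuous in $t$, so $u$ and $v$ have the same lifespan in the $H^s$ scale and $u\in C([0,\tau);H^s(\R))\Leftrightarrow v\in C([0,\tau);H^s(\R))$; and translation invariance of the supremum norm gives $\|v_x(t,\cdot)\|_{L^\infty(\R)}=\|u_x(t,\cdot)\|_{L^\infty(\R)}$ for every $t$.

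\textbf{Step 2 (blow-up, and return).} By Step 1, $v$ is a Camassa--Holm solution with $v_0=u_0\in H^s(\R)$, $s\ge4>3/2$, non-trivial and compactly supported, which is precisely the setting of \cite[Corollary 1.1]{himcmp}; that result provides a finite maximal existence time $T^\ast$ with $v\in C([0,T^\ast);H^s(\R))$ and $\int_0^{T^\ast}\|v_x(t,\cdot)\|_{L^\infty(\R)}\,dt=\infty$ (the integral form being the sharp Camassa--Holm continuation criterion: a solution whose slope is $L^1$ in time over a finite interval prolongs past it). Undoing the change of variables, $u(t,x)=v(t,x-2\omega t)$ shares the maximal $H^s$-lifespan $T^\ast$, so $u\in C([0,T^\ast);H^s(\R))$, and by the identity of supremum norms from Step 1, $\int_0^{T^\ast}\|u_x(t,\cdot)\|_{L^\infty(\R)}\,dt=\infty$. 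Thus the solution cannot be continued classically past the finite time $T^\ast$, which is what the corollary asserts; the preliminary remark (that a non-trivial compactly supported $u_0$ yields a solution whose support is never compact for $t>0$) is exactly the portion of Theorems \ref{teo1.1}--\ref{teo1.2} used above, and it is what forces the breakdown to be one of regularity.

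\textbf{Expected obstacle.} There is nothing deep here; the two things to get right are (a) checking that $\gamma=-2\omega$ is precisely what allows the drift $2\omega u_x+\gamma u_{xxx}$ to be absorbed into $2\omega m_x$, so that a single Galilean shift linearises away the extra transport — this is where the special hypothesis of the corollary (absent from Theorems \ref{teo1.1}--\ref{teo1.2}) is genuinely needed; and (b) the bookkeeping across the reduction, namely that the maximal $H^s$-existence interval is preserved and that the $L^1_tL^\infty_x$ blow-up criterion quoted from \cite[Corollary 1.1]{himcmp} is exactly the one carried over to $u$, so that $\int_0^{T^\ast}\|u_x\|_{L^\infty}=\infty$ is the actual blow-up of $u$ and not a formal inheritance. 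Everything else is the elementary verification of Step 1.
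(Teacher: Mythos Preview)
Your proof is correct and fills in the details the paper leaves implicit: the paper merely declares the corollary an ``immediate consequence'' of Theorem \ref{teo1.2} combined with \cite[Corollary 1.1]{himcmp}, whereas you supply the missing step, namely the Galilean change $v(t,x)=u(t,x+2\omega t)$ that converts \eqref{1.0.2} with $\gamma=-2\omega$ into the Camassa--Holm equation so that \cite[Corollary 1.1]{himcmp} applies verbatim and transfers back via $\|v_x(t,\cdot)\|_{L^\infty}=\|u_x(t,\cdot)\|_{L^\infty}$. One remark: your Steps 1--2 are complete on their own, and the invocation of Theorems \ref{teo1.1}--\ref{teo1.2} in the Plan is decorative rather than structural --- \cite[Corollary 1.1]{himcmp} requires only that $v_0=u_0$ be non-trivial and compactly supported, which is already the hypothesis.
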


In order to prove Theorem \ref{teo1.2} we use the ideas in \cite{linares} to show that if a solution of the DGH vanishes on an open set, we can construct an auxiliary function depending only on space and use the convolution to show that this function vanishes identically. Once this is established, and differently from \cite{linares}, we use conserved quantities to show that any solution $u$ of the DGH equation vanishing on open sets is necessarily null. In Section \ref{sec7} this idea is better explored and it is shown that if it is possible to construct such a function, in fact it is possible to find a one-parameter family of real functions, whose parameter is the time, vanishing identically. As a consequence, if the solution vanishes on an open set, then it vanishes on a strip containing this open set.

Our next result is:
\begin{theorem}\label{teo1.3}
Assume that $u$ is a solution of the DGH equation such that $u$ is bounded, $(\p_x\Lambda^{-2}u)(t,\cdot)\in C^1(\R)$, for each $t$ such that the solution exists, and $\|u(t,\cdot)\|_{H^1(\R)}$ is invariant. If $\omega+\gamma/2=0$ and the function $x\mapsto u(t,x)$ has compact support, for each $t\in[0,T)$, then $u\equiv0$.
\end{theorem}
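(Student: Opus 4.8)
The plan is to follow the convolution/unique-continuation mechanism from \cite{linares}, but powered by the conservation of $\|u(t,\cdot)\|_{H^1(\R)}$ rather than by a contradiction internal to the flow. First I would recall that, with $\al=1$, the operator $\Lambda^2=1-\p_x^2$ has inverse given by convolution with the Green's function $g(x)=\tfrac12 e^{-|x|}$, so that $u=g*m$ and $u_x=g'*m$ with $g'(x)=-\tfrac12\sign(x)e^{-|x|}$. Writing $\omega+\gamma/2=0$, the DGH equation in nonlocal form becomes
\begin{equation}\label{eq:nonlocalDGH}
u_t+uu_x+\p_x\Lambda^{-2}\Big(u^2+\tfrac12 u_x^2+\omega u\Big)=0,
\end{equation}
which is the form I would actually work with; the hypotheses that $u$ is bounded and $(\p_x\Lambda^{-2}u)(t,\cdot)\in C^1(\R)$ are exactly what is needed to make \eqref{eq:nonlocalDGH} a pointwise identity and to legitimately differentiate the convolution terms.

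Next, fix $t\in[0,T)$ and suppose $x\mapsto u(t,x)$ is supported in a compact interval $[a,b]$. For $x>b$ we have $u(t,x)=u_x(t,x)=0$, so \eqref{eq:nonlocalDGH} reduces to $\p_x\Lambda^{-2}\big(u^2+\tfrac12 u_x^2+\omega u\big)(t,x)=0$ there. Evaluating the convolution $\p_x\Lambda^{-2}F=g'*F$ with $F=u^2+\tfrac12u_x^2+\omega u$ supported in $[a,b]$, for $x>b$ one gets $g'*F(t,x)=-\tfrac12 e^{-x}\int_a^b e^{y}F(t,y)\,dy$, an exponential in $x$ that can vanish on an interval only if its coefficient $\int_a^b e^{y}F(t,y)\,dy$ is zero; the same argument on $x<a$ forces $\int_a^b e^{-y}F(t,y)\,dy=0$. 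Adding and subtracting, $\int_a^b \cosh(y)F(t,y)\,dy=\int_a^b\sinh(y)F(t,y)\,dy=0$. I would then insert the explicit $F$: the terms $\int u^2$ and $\int u_x^2$ (integration by parts is clean since $u$ is compactly supported) combine with the $\omega u$ term, and — this is the crux — the compact support of $u$ together with $\int_\R u_x\,dx=0$ lets the linear-in-$u$ contributions be controlled, while the conserved quantity $\|u(t,\cdot)\|_{H^1(\R)}^2=\int_\R(u^2+u_x^2)\,dx$ enters as a time-independent constant. Combining the two vanishing moments yields an identity of the form (const)$\cdot\|u(t,\cdot)\|_{H^1}^2=0$ up to the linear term, forcing $\|u(t,\cdot)\|_{H^1(\R)}=0$ once the $\omega$-linear piece is disposed of (using $\omega+\gamma/2=0$ and, if necessary, $\int_\R u\,dx$ being itself conserved, so that a nonzero $\int u$ would contradict compact support combined with a second conserved moment).

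The main obstacle I anticipate is handling the linear term $\omega u$ inside $F$: unlike the quadratic terms it is not sign-definite and does not directly reduce to the conserved $H^1$-norm. The way I would resolve it is to observe that, because $u(t,\cdot)$ has compact support and the DGH equation is itself a conservation law, the zeroth moment $\int_\R u(t,x)\,dx$ is time-invariant; running the same $e^{\pm x}$ convolution argument but now extracting the coefficient that is \emph{linear} in $u$ (comparing behaviour as $x\to+\infty$ and $x\to-\infty$) pins $\int_\R u\,dx$ and $\int_\R x\,u\,dx$ and ultimately kills the $\omega$-term, leaving the pure quadratic identity $\|u(t,\cdot)\|_{H^1(\R)}^2=0$. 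Since this holds for every $t\in[0,T)$ and the norm is in fact constant in $t$, we conclude $u\equiv0$. I would finally remark that this argument is precisely the ``energy version'' of the scheme used for Theorem \ref{teo1.2}, and is the prototype for the general framework developed in Section \ref{sec7}.
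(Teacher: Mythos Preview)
Your overall strategy---evaluate the nonlocal form of the equation outside the spatial support, extract vanishing exponential moments, and then invoke the conserved $H^1$ norm---is exactly the mechanism the paper uses (it reduces Theorem~\ref{teo1.3} to the implication $3\Rightarrow 2$ of Theorem~\ref{teo1.2}). But two concrete errors derail your execution.

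\textbf{The nonlocal form is wrong, and this creates a phantom obstacle.} Under the hypothesis $\omega+\gamma/2=0$ one has $2\omega u_x+\gamma u_{xxx}=2\omega(u_x-u_{xxx})=2\omega\Lambda^2 u_x$, so this piece is absorbed into the \emph{local} transport term, not into the convolution. The correct nonlocal form is
\[
u_t+(u+2\omega)u_x+\p_x\Lambda^{-2}\Big(u^2+\tfrac12 u_x^2\Big)=0,
\]
which is precisely equation~\eqref{7.1} in the paper (see also the proof of Proposition~\ref{prop3.3}). There is \emph{no} $\omega u$ inside $\Lambda^{-2}$. Consequently the function you should be calling $F$ is $u^2+\tfrac12 u_x^2\ge 0$, and once you obtain $\int_a^b e^{\pm y}F(t_0,y)\,dy=0$ you are done immediately: a non-negative integrand against a strictly positive weight forces $F\equiv0$, hence $u(t_0,\cdot)\equiv0$, and the conserved $H^1$ norm finishes the job. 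Your entire ``main obstacle'' paragraph---the sign-indefinite linear term, the appeal to conservation of $\int u$, the mysterious moment $\int xu$---is fighting a term that does not exist. The manipulations you sketch there (``an identity of the form (const)$\cdot\|u\|_{H^1}^2=0$ up to the linear term'') are not actually carried out and, as written, are not correct.

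\textbf{You silently assume $u_t=0$ outside the spatial support.} When you say ``for $x>b$ we have $u=u_x=0$, so the equation reduces to $\p_x\Lambda^{-2}F=0$'', you are using $u_t(t,x)=0$ for $x>b$. But compact support of $u(t,\cdot)$ at a \emph{single} time $t$ does not give this: the support may move with $t$, so a point $x>b(t)$ could lie inside the support at nearby times. What you need is an open set $\Omega\subset[0,T)\times\R$ on which $u$ (and hence $u_t$) vanishes. The paper makes exactly this reduction in its one-line proof of Theorem~\ref{teo1.3}: compact support for each $t$ yields such an $\Omega$, and then one runs the argument of $3\Rightarrow 2$ in Theorem~\ref{teo1.2} (Propositions~\ref{prop3.2}--\ref{prop3.3}). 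You should insert that step explicitly.

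With these two fixes your proposal collapses to the paper's own argument; nothing further is needed.
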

 
The demonstration of Theorem \ref{teo1.3} given in this work can be applied to the CH equation in the real line and, actually, it is rather different of those in \cite{constjmp,henry}.

The proof of both theorems \ref{teo1.2} and \ref{teo1.3} is presented in Section \ref{sec4}.

So far the results reported have been concerned with real problems related to the DHG equation, although it is well known that the study of the periodic DGH equation is also active, see \cite{mustafa, zhoujfa2007,zhou}. Hence, it is natural to ask whether the results above might have periodic counter-parts.

Before answering this question, we recall the following equivalence relation between real numbers: $x\sim y$ if and only if $x-y$ is an integer number. Then, the quotient space $\R/\mathbb{Z}\approx[0,1)$ can be identified with the circle $\mathbb{S}$ (sometimes also referred as one-dimensional torus). 

\begin{theorem}\label{teo1.4}
Let $u_0\in H^4(\mathbb{S})$ be a given initial data of the Cauchy problem
\bb\label{1.0.3}
\left\{
\ba{ll}
m_t+2\omega u_x+um_x+2u_xm=-\gamma u_{xxx},& t>0,\,\,x\in\R,\\
\\
u(0,x)=u_0(x), & x\in\R,\\
\\
u(t,x+1)=u(t,x), & t\geq 0,\,\,x\in\R.
\ea
\right.
\ee
Let $u=u(t,x)$ be the corresponding solution of \eqref{1.0.3} with lifespan $T>0$. If $\omega+\gamma/2=0$ and the function $x\mapsto u(t,x)$ is compactly supported, for each $t\in[0,T)$, then $u\equiv0$. In particular, $u_0\equiv0$.
\end{theorem}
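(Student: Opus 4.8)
The plan is to adapt the proof of Theorem \ref{teo1.3} to the periodic setting, replacing convolution on $\R$ by convolution on the circle $\mathbb{S}$. First I would rewrite the DGH equation in the nonlocal form obtained by applying $(1-\p_x^2)^{-1}$ to both sides. On $\mathbb{S}$ the operator $\Lambda^{-2}=(1-\p_x^2)^{-1}$ is again a convolution operator, now with the periodic Green's function $g_{\text{per}}(x)=\frac{\cosh(x-\lfloor x\rfloor-1/2)}{2\sinh(1/2)}$ for $x\in[0,1)$, extended periodically; this is the standard kernel used in the periodic CH literature. With $\omega+\gamma/2=0$ the equation takes the form
\bb
u_t+uu_x+\p_x\Lambda^{-2}\Big(u^2+\tfrac12 u_x^2\Big)=0,
\ee
or, in characteristic form along the flow of $u$, $\frac{d}{dt}u(t,q(t,x))=-(\p_x\Lambda^{-2}F)(t,q(t,x))$ where $F=u^2+\tfrac12 u_x^2$ and $q$ is the flow map.

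Next I would exploit the conserved quantity ${\cal H}_1(t)=\int_{\mathbb{S}}(u^2+u_x^2)\,dx=\|u(t,\cdot)\|_{H^1(\mathbb{S})}^2$, which is invariant for the periodic DGH equation exactly as in the real line case (this follows from the bi-Hamiltonian structure recalled in the introduction, and local well-posedness in $H^4(\mathbb{S})$ guarantees enough regularity to justify the computation). The key point is that if $x\mapsto u(t,x)$ is compactly supported in the sense appropriate to the circle — meaning $u(t,\cdot)$ vanishes on some nonempty open arc of $\mathbb{S}$ — then, following the argument behind Theorems \ref{teo1.2} and \ref{teo1.3}, one constructs on that arc the auxiliary relation forcing the convolution $\p_x g_{\text{per}}\ast F(t,\cdot)$ and $g_{\text{per}}\ast F(t,\cdot)$ to be proportional there, and since $F\geq 0$ and $g_{\text{per}}>0$ this propagates to show $F(t,\cdot)\equiv 0$, hence $u(t,\cdot)\equiv 0$ and in particular ${\cal H}_1(t)=0$. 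By conservation ${\cal H}_1(0)=0$, so $u_0\equiv 0$, and then well-posedness gives $u\equiv 0$.

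More precisely, the mechanism I would use is the one already available from Theorem \ref{teo1.2}: on the open arc where $u$ vanishes we have $u_t=u_x=0$, so the equation reduces to $\p_x\Lambda^{-2}F=0$ on that arc, i.e. $\Lambda^{-2}F$ is constant there, say equal to $c(t)$; applying $\Lambda^{-2}$ means $(g_{\text{per}}\ast F)(t,x)=c(t)$ for $x$ in the arc. Differentiating twice and using $\Lambda^{-2}=(1-\p_x^2)^{-1}$ inside the arc gives $(g_{\text{per}}\ast F)(t,x)=F(t,x)+(\p_x^2 g_{\text{per}}\ast F)(t,x)$ type identities, and one extracts from $g_{\text{per}}''=g_{\text{per}}-\delta$ that on the arc $c(t)=(g_{\text{per}}\ast F)(t,x)$ satisfies $c''(t,x)=c(t,x)$ in $x$, forcing $c(t)=0$ because a bounded nonnegative convolution cannot be a nonzero solution of $v''=v$ on an interval while staying consistent with periodicity and positivity of the kernel. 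Once $(g_{\text{per}}\ast F)(t,\cdot)\equiv 0$ on $\mathbb{S}$, positivity of $g_{\text{per}}$ and nonnegativity of $F$ give $F\equiv 0$, whence $u(t,\cdot)\equiv 0$.

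The main obstacle I anticipate is making the convolution argument genuinely work on $\mathbb{S}$ rather than on $\R$: on the line one uses that $g\ast F$ decays at infinity together with the sign of $g'$ on the half-lines to force vanishing, whereas on the compact circle there is no decay to exploit. The fix is precisely to replace the decay argument by the conserved energy ${\cal H}_1$ — this is the innovation the introduction advertises — so that once one shows $u(t,\cdot)$ vanishes on the whole circle for a single $t$, invariance of the $H^1$ norm immediately propagates it to $t=0$ and then to all $t$. A secondary technical point is verifying that the regularity $u_0\in H^4(\mathbb{S})$ indeed yields a solution smooth enough for $\p_x\Lambda^{-2}u(t,\cdot)\in C^1(\mathbb{S})$ and for the energy computation to be rigorous; this is standard from \cite{mustafa,zhoujfa2007,zhou} and parallels the real-line discussion preceding the theorems, so I would cite those and move on rather than redo the well-posedness theory.
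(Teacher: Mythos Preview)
Your overall strategy coincides with the paper's proof in Section~\ref{sec5}: pass to the nonlocal form, use that on an open set where $u$ vanishes the equation reduces to $\p_x\Lambda^{-2}(u^2+u_x^2/2)=0$, combine the identity $\p_x^2\Lambda^{-2}=\Lambda^{-2}-1$ with the strict positivity of the periodic Green kernel and the nonnegativity of $u^2+u_x^2/2$ to force $u(t_0,\cdot)\equiv 0$ on the whole circle for some $t_0$, and finally invoke conservation of $\|u(t,\cdot)\|_{H^1(\mathbb S)}^2$ to propagate this to all times. The paper executes the key step by integrating, writing $0=F(b)-F(a)=\int_a^b(\Lambda^{-2}f)(x)\,dx$ and using that the integrand is nonnegative; your pointwise formulation ($\Lambda^{-2}F$ is constant $c$ on the arc and satisfies $(\Lambda^{-2}F)''=\Lambda^{-2}F-F=c$ there, hence $0=c$) is equivalent.

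Two points to tighten. First, your nonlocal form drops the linear transport term: with $\gamma=-2\omega$ the correct identity is $u_t+(u+2\omega)u_x+\p_x\Lambda^{-2}(u^2+u_x^2/2)=0$ (cf.\ the proof of Proposition~\ref{prop3.3} and equation~\eqref{7.1}); this is harmless for the argument since $u_x=0$ on the arc anyway, but it should be stated correctly. Second, your claim ``on the open arc where $u$ vanishes we have $u_t=u_x=0$'' requires an open set in the $(t,x)$ variables, not just an arc at fixed $t$: the hypothesis only gives, for each $t$, an arc $I_t\subset\mathbb S$ with $u(t,\cdot)|_{I_t}=0$, and one must argue that these arcs produce a genuine open rectangle in $[0,T)\times\mathbb S$. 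The paper addresses this via Remark~\ref{rem4.3} (the complement of the support contains an open connected set $\Omega$), and you should make that passage explicit rather than asserting $u_t=0$ directly. Finally, your justification for $c=0$ (``a bounded nonnegative convolution cannot be a nonzero solution of $v''=v$ while staying consistent with periodicity'') is more convoluted than necessary; the clean reason is simply that a function constant on an interval with $v''=v$ there must satisfy $0=c$.
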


Theorem \ref{teo1.4} is actually a consequence of the following stronger result, regarding unique continuation of the solutions of \eqref{1.0.3}:

\begin{theorem}\label{teo1.5}
Let $u$ be a bounded solution of the DGH equation, with lifespan $T>0$, satisfying the periodic condition $u(t,x+1)=u(t,x)$, such that $(\p_x\Lambda^{-2}u)(t,\cdot)\in C^1(\mathbb{S})$ and $\|u(t,\cdot)\|_{H^1(\mathbb{S})}$ is invariant, for each $t$ such that the solution exists. If there exists an open set $\Omega\neq\emptyset$ such that $\left.u\right|_{\Omega}\equiv0$, then $u$ vanishes identically. 
\end{theorem}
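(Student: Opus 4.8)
The plan is to mimic the structure used for Theorem \ref{teo1.2} (and explored further in Section \ref{sec7}), but adapted to the circle $\mathbb{S}$. Writing $m = u - u_{xx} = \Lambda^2 u$, where $\Lambda^2 = 1 - \p_x^2$, the DGH equation in the form appearing in \eqref{1.0.3} can be recast as a transport equation for $u$ by applying $\Lambda^{-2} = (1-\p_x^2)^{-1}$, which on $\mathbb{S}$ is convolution with the periodic Green's function $p(x)$ of $\Lambda^2$. This yields an identity of the schematic form $u_t + u u_x + \p_x\Lambda^{-2} F(u) + (\text{terms in }\omega,\gamma) = 0$, where $F(u)$ collects the quadratic nonlinearity $u^2 + \tfrac{1}{2}u_x^2$ together with the linear momentum-type terms. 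First I would make this representation precise on $\mathbb{S}$ and record that $p$ is smooth away from the lattice and satisfies the appropriate jump conditions for $p'$, so that the convolution $\p_x\Lambda^{-2}$ makes sense under the hypothesis $(\p_x\Lambda^{-2}u)(t,\cdot)\in C^1(\mathbb{S})$.

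Next, working on the open set $\Omega$ where $u\equiv0$: pick an open time-slice, i.e. a point $t_0$ and an open spatial interval $I$ with $\{t_0\}\times I\subseteq\Omega$. On $I$ all spatial derivatives of $u(t_0,\cdot)$ vanish, so the transport identity collapses at $t=t_0$, $x\in I$ to an equation forcing the auxiliary function $x\mapsto (\p_x\Lambda^{-2}F(u))(t_0,x)$ to equal a constant ($-\omega$-type constant) on $I$; differentiating once more in $x$, one gets that a certain convolution $p'\ast G$ (with $G$ built from $u(t_0,\cdot)$ and the constants) vanishes on $I$. The key step — and this is where the conserved-quantity idea of the paper enters, replacing the PDE-analytic unique-continuation machinery of \cite{linares} — is to exploit the periodic Fourier side: the invariance of $\|u(t,\cdot)\|_{H^1(\mathbb{S})}$ together with the vanishing-on-$\Omega$ hypothesis pins down the zero-mode/energy bookkeeping, and combined with the convolution identity forces $u(t_0,\cdot)\equiv 0$ on all of $\mathbb{S}$ and $\omega+\gamma/2=0$. (Concretely: the energy $\int_{\mathbb{S}}(u^2+u_x^2)\,dx$ is continuous in $t$ and equals its value on $\Omega$, which is $0$ at $t_0$; hence $u(t_0,\cdot)\equiv0$ immediately, and then feeding this back into the transport identity on $I$ yields $\omega+\gamma/2=0$.)

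Once $u(t_0,\cdot)\equiv 0$ and $\omega+\gamma/2=0$ are established, the conclusion $u\equiv0$ follows from uniqueness/persistence for the Cauchy problem: the zero function is a solution with the same data at $t_0$, and by local well-posedness on $\mathbb{S}$ (the periodic analogue of \cite[Proposition 2.1]{tiancmp2005}, cf.\ \cite{mustafa,zhoujfa2007,zhou}) the two solutions coincide on the whole lifespan $T$; alternatively one runs the energy argument at every $t$, using that $\|u(t,\cdot)\|_{H^1(\mathbb{S})}$ is invariant and was shown to vanish. I expect the main obstacle to be the rigorous handling of the convolution operator $\p_x\Lambda^{-2}$ on $\mathbb{S}$ and the justification that the auxiliary function is genuinely constant on $I$ — i.e.\ controlling the periodic Green's function and its jump, and making sure the regularity hypothesis $(\p_x\Lambda^{-2}u)(t,\cdot)\in C^1(\mathbb{S})$ is exactly what is needed to legitimately differentiate the transport identity on $I$; the passage from ``vanishes on $\Omega$'' to ``energy is zero at $t_0$'' is, by contrast, the soft and decisive point that the conserved-quantity viewpoint makes almost immediate.
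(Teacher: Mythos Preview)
Your proposal contains a genuine gap at the decisive middle step. You correctly set up the nonlocal transport form of the equation and correctly observe that on $\{t_0\}\times I\subseteq\Omega$ the identity collapses, forcing $F(x):=\p_x\Lambda^{-2}\bigl(u^2+\tfrac12 u_x^2\bigr)(t_0,x)$ to vanish on $I$. You also correctly note that once $u(t_0,\cdot)\equiv0$ on all of $\mathbb{S}$ is known, the invariance of $\|u(t,\cdot)\|_{H^1(\mathbb{S})}$ finishes the proof. The problem is your bridge between these two facts: the parenthetical ``the energy $\int_{\mathbb{S}}(u^2+u_x^2)\,dx$ \dots\ equals its value on $\Omega$, which is $0$ at $t_0$'' is simply false. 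The energy is an integral over the whole circle; knowing $u(t_0,\cdot)\big|_I=0$ tells you nothing about $\int_{\mathbb{S}\setminus I}(u^2+u_x^2)\,dx$. The entire difficulty of the theorem is precisely to propagate the vanishing from the subinterval $I$ to all of $\mathbb{S}$, and energy conservation alone cannot do this --- it is what you invoke \emph{after} that propagation, not in place of it. Your reference to the ``periodic Fourier side'' does not supply the missing step either.

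What the paper actually does for this propagation is short but uses a specific mechanism you omit. With $f(x)=u(t_0,x)^2+\tfrac12 u_x(t_0,x)^2\ge0$ and $F=\p_x\Lambda^{-2}f$, the transport identity gives $F\big|_{[a,b]}=0$. Now use $F'=\p_x^2\Lambda^{-2}f=(\Lambda^{-2}-1)f$; since $f\big|_{[a,b]}=0$, this reads $F'=\Lambda^{-2}f$ on $[a,b]$, whence
\[
0=F(b)-F(a)=\int_a^b(\Lambda^{-2}f)(x)\,dx.
\]
The periodic Green kernel $p(x)=\cosh(x-\lfloor x\rfloor-1/2)/(2\sinh(1/2))$ is strictly positive, so $\Lambda^{-2}f\ge0$; the vanishing integral then forces $\Lambda^{-2}f\equiv0$ on $[a,b]$, and strict positivity of $p$ again gives $f\equiv0$ on all of $\mathbb{S}$. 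Only now is $u(t_0,\cdot)\equiv0$, and only now does the conserved $H^1$ norm take over. Finally, your attempt to \emph{derive} $\omega+\gamma/2=0$ is misplaced: in the paper this condition enters through the derivation of the nonlocal identity (Proposition~\ref{prop3.3}) and is used rather than concluded.
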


The proof of theorems \ref{teo1.4} and \ref{teo1.5} are given in Section \ref{sec5}.

The remaining of the paper is as follows: In Section \ref{sec6} we show how to extend our results to weakly dissipative models considered in \cite{novjmp2013,wei,wujde2009,zhangjmp2015}. In Section \ref{sec7} we explore the main idea introduced in sections \ref{sec4} and \ref{sec5} and show how we can obtain unique continuation results for the solutions of equations of the CH and DGH types using their conserved quantities. A discussion of our results is presented in Section \ref{sec8}, while in Section \ref{sec9} we present our conclusions.

{\bf Novelty of the work and its contributions.} The first point to be mentioned is a unique continuation result for both real and periodic initial value problems related to the DGH equation, given in theorems \ref{teo1.2} and \ref{teo1.5}, respectively. As a consequence, we show that the unique compactly supported solution of these problems is the identically zero or, which is equivalent, in case these problems have an initial data compactly supported, the solution loses the latter property instantly. Our approach to prove the uniqueness of compactly supported solutions can be directly applied, for instance, to the Camassa-Holm equation and it is different from those methods used in \cite{constjmp, henry}. Also, for the unique continuation results, not only what is done here is new considering the specific problem, but it is also different of that reported in \cite{linares} for the Camassa-Holm equation, since in our case we use one of the Hamiltonians of the equation (corresponding to the invariance of the Sobolev norm) to prove a unique continuation of its solutions. A third feature of the present work is the fact that we extend in an intrinsic way our results to some models related to both CH and DGH equations.

{\bf Notation and conventions.} Here we fix and introduce the notations and conventions used along the work. By $\mathbb{E}$ we denote either $\R$ or $\mathbb{S}$. The space of the integrable functions on $\mathbb{E}$ is denoted by $L^1(\mathbb{E})$, while the Hilbert Sobolev space of order $s$ is denoted by $H^s(\mathbb{E})$, for each $s\in\mathbb{R}$. If $f$ and $g$ are two integrable functions, their convolution is denoted by $f\ast g$. If $u=u(t,x)$, we denote by $u_0(x)$ the function $x\mapsto u(0,x)$, $m=u-u_{xx}$ and $m_0=u_0-u_0''$. Given a function $u$ depending on two variables, unless otherwise stated, $u_t$ and $u_x$ denotes the derivatives of $u$ with respect to the first and the second arguments, respectively. We recall the Helmholtz operator $\Lambda^2=1-\p_x^2$ and its inverse $\Lambda^{-2}=(1-\p_x^2)^{-1}$, given by $\Lambda^{-2}(f)=g\ast f$, where $g(x)=e^{-|x|}/2$ in the real case whereas 
$$g(x)=\f{\cosh{(x-\lfloor x \rfloor-1/2)}}{2\sinh(1/2)}$$
in the periodic case. Above, $\lfloor \cdot \rfloor$ denotes the greatest integer function. The set of continuous and bounded functions is denoted by $C_b^0$.

\section{Proof of Theorem \ref{teo1.1} and technical results}\label{sec3}

We begin with some known facts from the literature and then prove technical ones that will be of great importance to pursue our goals.

\begin{lemma}\label{lema2.1}\textsc{(Local Well-Posedness).}
Given $u_0\in H^s(\mathbb{E})$, $s>3/2$, there exists a maximal time $T>0$, and a unique solution $u$ to the problem \eqref{1.0.2} such that $u\in C^0\left([0,T);H^s(\mathbb{E})\right)\cap C^1\left([0,T);H^{s-1}(\mathbb{E})\right)$. Moreover, such solution is continuously dependent on the initial data.
\end{lemma}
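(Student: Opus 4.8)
The plan is to recast the Cauchy problem \eqref{1.0.2} as a nonlocal quasilinear transport equation and then invoke Kato's abstract theory for quasilinear evolution equations; the periodic case would be handled verbatim, only the convolution kernel of $\Lambda^{-2}$ changing.

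First I would apply $\Lambda^{-2}=(1-\p_x^2)^{-1}$ to the equation in \eqref{1.0.2}. Using the identities $\Lambda^{-2}\p_x^2 g=\Lambda^{-2}g-g$ and $\Lambda^{-2}(uu_x)=\tfrac12\p_x\Lambda^{-2}(u^2)$, a short manipulation shows that \eqref{1.0.2} is equivalent to
\[
u_t+(u-\gamma)u_x=-\p_x\Lambda^{-2}\Big(u^2+\tfrac12 u_x^2+(2\omega+\gamma)u\Big)=:f(u),\qquad u(0,\cdot)=u_0,
\]
so that the third-order term is absorbed and $f$ acts, modulo a lower-order term, as a first-order operator, hence smoothing relative to the transport part. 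This is the standard ``convolution form'' that also underlies the conserved-quantity computations used later in the paper.

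Then I would set $X=H^{s-1}(\mathbb{E})$, $Y=H^s(\mathbb{E})$, take $Q=\Lambda$ as the order-one isomorphism $Y\to X$, and $A(u)=(u-\gamma)\p_x$, and verify the three standard hypotheses of Kato's theorem. (i) For $u\in Y$ with $s>3/2$ one has $H^s\hookrightarrow C_b^1$, so $u-\gamma$ is a bounded $C^1$ coefficient; hence $A(u)$ is quasi-$m$-accretive on $X$ with constant controlled by $\|u\|_{H^s}$, and $\|(A(u)-A(v))w\|_X\lesssim\|u-v\|_X\|w\|_Y$. (ii) The commutator identity $QA(u)Q^{-1}=A(u)+B(u)$ with $B(u)=[\Lambda,(u-\gamma)\p_x]\Lambda^{-1}\in\mathcal{L}(X)$, together with its Lipschitz dependence on $u$, follows from the Kato--Ponce commutator estimate. (iii) Since $H^s$ is a Banach algebra for $s>1/2$, we have $u^2,u_x^2\in H^{s-1}$ and $u\in H^{s-1}$, while $\p_x\Lambda^{-2}\colon H^{s-1}\to H^s$; hence $f\colon Y\to Y$ is bounded on bounded sets and Lipschitz from $Y$ to $X$. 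Kato's theorem then yields the maximal time $T>0$, the unique solution $u\in C^0([0,T);H^s(\mathbb{E}))\cap C^1([0,T);H^{s-1}(\mathbb{E}))$, and its continuous dependence on $u_0$.

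I expect the main obstacle to be precisely the energy/commutator estimate underlying (ii), i.e.\ controlling $\big(\Lambda^{s-1}((u-\gamma)u_x),\Lambda^{s-1}u\big)_{L^2}$, which is where the restriction $s>3/2$ enters, through $\|u_x\|_{L^\infty}\lesssim\|u\|_{H^s}$. A self-contained alternative would replace the appeal to Kato by a Friedrichs-mollifier scheme: solve the mollified equations as ODEs in $H^s$, establish $\eps$-uniform bounds $\frac{d}{dt}\|u^\eps\|_{H^s}^2\le C(\|u^\eps\|_{H^s})$ via the same commutator estimate, and pass to the limit with an Aubin--Lions argument, uniqueness and continuous dependence following from an $L^2$ energy estimate for the difference of two solutions together with a Bona--Smith regularization argument; but the analytic core is identical.
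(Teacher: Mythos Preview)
Your approach is correct and is precisely the standard one: the paper does not give its own proof of this lemma at all, but simply cites \cite[Proposition~2.1]{tiancmp2005} for $\mathbb{E}=\R$ and \cite[Theorem~2.1]{zhoujfa2007}, \cite[Theorem~2.1]{zhou} for $\mathbb{E}=\mathbb{S}$, and those references establish local well-posedness exactly by recasting the DGH equation in the nonlocal transport form
\[
u_t+(u-\gamma)u_x=-\p_x\Lambda^{-2}\Big(u^2+\tfrac12 u_x^2+(2\omega+\gamma)u\Big)
\]
and applying Kato's quasilinear theory with $A(u)=(u-\gamma)\p_x$, $X=H^{s-1}$, $Y=H^s$, $Q=\Lambda$. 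Your identification of the commutator estimate $[\Lambda,(u-\gamma)\p_x]\Lambda^{-1}\in\mathcal{L}(H^{s-1})$ via Kato--Ponce as the step that forces $s>3/2$ is also exactly right, so there is nothing to add.
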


The proof of Lemma \ref{lema2.1} in the real case can be found in \cite[Proposition 2.1]{tiancmp2005}. For the periodic case, see \cite[Theorem 2.1]{zhoujfa2007}, \cite[Theorem 2.1]{zhou} and references thereof. In \cite{mustafa} a similar result is proved, but assuming that $u_0\in C^1(\mathbb{S})$, which implies $u\in C^0\left([0,T);C^1(\mathbb{S}\right)\cap C^1\left([0,T);C^0(\mathbb{S}\right))$.

The next results will be helpful in the demonstrations of theorems \ref{teo1.1} and \ref{teo1.2} and, for this reason, henceforth in the remaining of this section we only consider the real case.

\begin{lemma}\label{lema2.2}\textsc{(\cite[Lemma 2.3]{liu}, \cite{tiancmp2005})}
Assume that $u_0\in H^3(\R)$ be an initial data of the problem \eqref{1.0.2}, $u$ its corresponding solution, $T>0$ the lifespan of $u$ and $m=u-u_{xx}$. Then
\bb\label{2.1}
m_0(x)+\omega+\f{\gamma}{2}=\left(m(t,q(t,x))+\omega+\f{\gamma}{2}\right)q_x(t,x)^2,\quad (t,x)\in[0,T)\times\R,
\ee
where $q(t,x)\in C^1\left([0,T)\times\R,\R\right)$ is the unique solution of the initial value problem
$$
\left\{
\ba{lcl}
\ds{\f{d q}{dt}}&=&\ds{u(t,q)-\gamma},\\
\\
q(0,x)&=&x.
\ea
\right.
$$
Moreover, for each $t\in[0,T)$, the function $q(t,\cdot)$ is an increasing diffeomorfism of the line, satisfying
$$q_x(t,x)=\exp{\left(\int_0^tu_x(s,q(t,s))ds\right)}>0.$$
\end{lemma}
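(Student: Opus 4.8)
The plan is to build the flow $q$ from classical ODE theory, read off its stated properties, obtain \eqref{2.1} by differentiating an explicit quantity along the characteristics, and then close the small gap between the hypothesis $u_{0}\in H^{3}(\R)$ and what the pointwise computation literally requires.

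First I would construct the flow. By Lemma \ref{lema2.1} with $s=3$ and the Sobolev embedding $H^{3}(\R)\hookrightarrow C^{2}_{b}(\R)$, the functions $u(t,\cdot),u_{x}(t,\cdot),u_{xx}(t,\cdot)$ are bounded and $(t,x)\mapsto u(t,x)$ is continuous; in particular, on every slab $[0,T']\times\R$ with $T'<T$ the map $(t,q)\mapsto u(t,q)-\gamma$ is continuous, bounded, and globally Lipschitz in $q$ uniformly in $t$. The Picard--Lindel\"of theorem, together with the classical theorems on continuous and differentiable dependence on the initial condition, then produces a unique $q\in C^{1}([0,T)\times\R,\R)$ with $\partial_{t}q=u(t,q)-\gamma$ and $q(0,x)=x$. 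Differentiating this relation in $x$ gives $\partial_{t}q_{x}=u_{x}(t,q)\,q_{x}$ with $q_{x}(0,x)=1$, hence $q_{x}(t,x)=\exp\!\Big(\int_{0}^{t}u_{x}(s,q(s,x))\,ds\Big)>0$. Positivity of $q_{x}$ forces $q(t,\cdot)$ to be strictly increasing, hence injective, and the bound $|q(t,x)-x|\le\int_{0}^{t}\big(\|u(s,\cdot)\|_{L^{\infty}}+|\gamma|\big)\,ds<\infty$ shows that $q(t,x)\to\pm\infty$ as $x\to\pm\infty$; thus $q(t,\cdot)$ is onto, and a strictly increasing $C^{1}$ bijection of $\R$ with nowhere vanishing derivative is a diffeomorphism of $\R$.

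Next I would establish \eqref{2.1}. Since $u_{xxx}=(u-m)_{x}=u_{x}-m_{x}$, the equation in \eqref{1.0.2} is equivalent to the transport form $m_{t}+(u-\gamma)m_{x}=-2u_{x}\big(m+\omega+\tfrac{\gamma}{2}\big)$. Put $w(t,x):=\big(m(t,q(t,x))+\omega+\tfrac{\gamma}{2}\big)\,q_{x}(t,x)^{2}$. Differentiating $m(t,q(t,x))$ along the characteristic and using $\partial_{t}q=u(t,q)-\gamma$ together with the transport equation gives $\tfrac{d}{dt}\,m(t,q(t,x))=\big(m_{t}+(u-\gamma)m_{x}\big)\big|_{(t,q(t,x))}=-2u_{x}(t,q(t,x))\big(m(t,q(t,x))+\omega+\tfrac{\gamma}{2}\big)$, while $\tfrac{d}{dt}\,q_{x}(t,x)^{2}=2u_{x}(t,q(t,x))\,q_{x}(t,x)^{2}$. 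The two contributions to $\tfrac{d}{dt}w$ cancel, so $t\mapsto w(t,x)$ is constant for each $x$; evaluating at $t=0$, where $q(0,x)=x$ and $q_{x}(0,x)=1$, yields $w(t,x)=m_{0}(x)+\omega+\tfrac{\gamma}{2}$, which is \eqref{2.1}.

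The step I expect to be the real obstacle is making the chain rule in that last computation rigorous: with only $u_{0}\in H^{3}(\R)$ one has $m\in C^{0}([0,T);H^{1}(\R))$, so $m(t,\cdot)$ is continuous and bounded and $m(t,q(t,\cdot))$ is a well-defined continuous function, but $m_{x}\in C^{0}([0,T);L^{2}(\R))$ is not defined pointwise, so $\tfrac{d}{dt}\,m(t,q(t,x))$ cannot literally be computed as above. I would close this gap by approximation: take $u_{0}^{n}\in H^{\infty}(\R)$ with $u_{0}^{n}\to u_{0}$ in $H^{3}(\R)$; for the corresponding smooth (hence classical) solutions $u^{n}$ every manipulation above is legitimate, so \eqref{2.1} holds for each $n$, and one passes to the limit using the continuous dependence of Lemma \ref{lema2.1} (which, on $[0,T']$ with $T'<T$, gives $u^{n}\to u$ in $C^{0}([0,T'];H^{3}(\R))$, hence $m^{n}\to m$ and $u_{x}^{n}\to u_{x}$ uniformly on $[0,T']\times\R$, and then $q^{n}\to q$ and $q_{x}^{n}\to q_{x}$ locally uniformly by Gr\"onwall's inequality applied to the ODEs). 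This is the route underlying \cite{liu,tiancmp2005}. Alternatively, one may read the transport equation for $m$ as an identity in $C^{0}([0,T);L^{2}(\R))$ and differentiate $t\mapsto m(t,q(t,\cdot))$ in $L^{2}$, using the $C^{1}$ regularity of the flow and the uniform bounds on $q_{x}$ and $1/q_{x}$, recovering the pointwise statement by continuity; this also works, at the price of more bookkeeping.
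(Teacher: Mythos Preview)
The paper does not supply its own proof of Lemma~\ref{lema2.2}; it simply quotes the statement from \cite[Lemma~2.3]{liu} and \cite{tiancmp2005}. Your argument is the standard one underlying those references: build the Lagrangian flow by Picard--Lindel\"of, differentiate the flow equation in $x$ to obtain the exponential formula for $q_x$ (note the paper's display $q(t,s)$ inside the integral is a typo for $q(s,x)$, which you wrote correctly), rewrite the PDE as the transport equation $m_t+(u-\gamma)m_x=-2u_x\big(m+\omega+\tfrac{\gamma}{2}\big)$, and check that $(m(t,q)+\omega+\tfrac{\gamma}{2})q_x^2$ is constant along characteristics. Your identification of the regularity gap at $s=3$ and the density/continuous-dependence cure is exactly how the cited works proceed, so your proposal is correct and matches the intended route.
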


We are in conditions to prove Theorem \ref{teo1.1} right now.

{\bf Proof of Theorem \ref{teo1.1}.} Let $u_0\in H^4(\R)$ be an initial data of the problem \eqref{1.0.2}, $u$ its corresponding solution and $m=u-u_{xx}$. Assume that $m_0+\omega+\gamma/2$ is supported in the interval $[a,b]$. In particular, $m_0(z)+\omega+\gamma/2=0$, provided that $z\leq a$ or $z\geq b$.

By Lemma \ref{lema2.2}, $q(t,\cdot)$ is an increasing diffeomorphism. Consequently, from \eqref{2.1} we conclude that $m(t,q(t,z))+\omega+\gamma/2$ vanishes, whether $z\leq q(t,a)$ or $z\geq q(t,b)$, which implies that $m(t,\cdot)+\omega+\gamma/2$ has its support in the interval $[q(t,a),q(t,b)]$. \qed

\begin{proposition}\label{prop2.1}
Let $u\in C^2(\R)\cap H^2(\R)$ be a solution of \eqref{1.0.2} such that $m+\omega+\gamma/2$ has compact support. Then $u+\omega+\gamma/2$ has compact support if and only if 
\bb\label{2.0.2}
\int_\R e^{\pm x}\left[m(t,x)+\omega+\f{\gamma}{2}\right]dx=0.
\ee
\end{proposition}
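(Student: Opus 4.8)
The plan is to exploit the representation $u = \Lambda^{-2} m = g \ast m$ with $g(x) = e^{-|x|}/2$, together with the hypothesis that $m + \omega + \gamma/2$ is compactly supported, say in an interval $[a,b]$. First I would write, for $x$ to the right of the support (i.e. $x \geq b$),
$$
u(t,x) + \omega + \frac{\gamma}{2} = \big(g \ast (m + \omega + \tfrac{\gamma}{2})\big)(t,x) = \frac{1}{2}\int_a^b e^{-|x-y|}\Big(m(t,y)+\omega+\tfrac{\gamma}{2}\Big)\,dy = \frac{e^{-x}}{2}\int_a^b e^{y}\Big(m(t,y)+\omega+\tfrac{\gamma}{2}\Big)\,dy,
$$
using $|x-y| = x-y$ on that range; the constant $\omega+\gamma/2$ appearing inside the convolution is harmless because $g\ast 1 = 1$. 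Here I am implicitly using that $\int_{\mathbb R} e^{y}\,dy$ against the compactly supported function converges, and that $\int_a^b e^{y}\,dy\cdot(\omega+\gamma/2)$ can be absorbed since $m+\omega+\gamma/2$ has compact support (so in fact $\int_{\mathbb R} e^{y}[m(t,y)+\omega+\gamma/2]\,dy$ is exactly the integral over $[a,b]$, which is finite). Symmetrically, for $x \leq a$ one gets
$$
u(t,x) + \omega + \frac{\gamma}{2} = \frac{e^{x}}{2}\int_a^b e^{-y}\Big(m(t,y)+\omega+\tfrac{\gamma}{2}\Big)\,dy.
$$

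Next I would read off the equivalence. If $u + \omega + \gamma/2$ has compact support, then for $x$ large enough the left-hand side of the first display vanishes, but $e^{-x}/2 \neq 0$, forcing $\int_{\mathbb R} e^{y}[m(t,y)+\omega+\gamma/2]\,dy = 0$; similarly the second display with $x \to -\infty$ forces $\int_{\mathbb R} e^{-y}[m(t,y)+\omega+\gamma/2]\,dy = 0$. That is exactly \eqref{2.0.2}. Conversely, if both integrals in \eqref{2.0.2} vanish, then the two displays show $u(t,x)+\omega+\gamma/2 = 0$ for all $x \geq b$ and all $x \leq a$, so $u+\omega+\gamma/2$ is supported in $[a,b]$, hence compactly supported.

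A small point requiring care: one should check that the constant $\omega + \gamma/2$ does not obstruct anything. Since $m + \omega + \gamma/2$ has compact support by hypothesis, the functions $x \mapsto e^{\pm x}(m(t,x)+\omega+\gamma/2)$ are genuinely integrable, so the integrals in \eqref{2.0.2} are well defined and the manipulations above are legitimate; moreover $g \ast (\text{const}) = \text{const}$ so writing $u + \omega + \gamma/2 = g \ast (m+\omega+\gamma/2)$ is valid. I expect the only mild obstacle to be bookkeeping with the constant shift and making sure the regularity $u \in C^2(\mathbb R)\cap H^2(\mathbb R)$ is enough to justify $u = g\ast m$ pointwise and to evaluate limits as $|x|\to\infty$; this follows because $u,u_{xx} \in L^2 \cap C^0$ and $g \in L^1$, so the convolution identity holds classically and $u(t,x) \to 0$ as $|x|\to\infty$ is encoded by the explicit formulas above once the two integrals are known.
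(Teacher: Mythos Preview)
Your proof is correct and follows essentially the same approach as the paper: both arguments use the identity $u+\omega+\gamma/2 = g\ast(m+\omega+\gamma/2)$ (equivalently, $g\ast 1=1$), restrict to $x$ outside the support of $m+\omega+\gamma/2$, and read off the explicit exponential tails to obtain the equivalence. Your presentation is in fact more streamlined than the paper's, which splits the convolution into three pieces and treats $x>L$ and $x<-L$ separately with some redundancy; the content is the same.
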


\begin{proof}
Assume that $u(t,\cdot)$ is a solution of \eqref{1.0.2} such that $m(t,\cdot)+\omega+\gamma/2$ is compactly supported. Then, there exists $L>0$ such that $m(t,x)+\omega+\gamma/2=0$ for $|x|>L$.

Let us assume that $u+\omega+\gamma/2$ is compactly supported. Without loss of generality, we can assume that $L$ is large enough to guarantee that $u(t,z)+\omega+\gamma/2=0$, provided that $|z|>L$.

Suppose firstly $x>L$. Then
$$
\ba{lcl}
\ds{2u(t,x)+2\omega+\gamma}&=&\ds{\int_\R e^{-|x-y|}\left[m(t,y)+\omega+\f{\gamma}{2}\right]dy=e^{-x}\int_{-\infty}^{-x} e^{y}\underbrace{\left[m(t,y)+\omega+\f{\gamma}{2}\right]}_{=0,\,\text{since}\,y<-L}dy}\\
\\
&+&\ds{e^{-x}\int_{-x}^x e^{y}\left[m(t,y)+\omega+\f{\gamma}{2}\right]dy+e^{x}\int_{x}^\infty e^{-y}\underbrace{\left[m(t,y)+\omega+\f{\gamma}{2}\right]}_{=0,\,\text{since}\,y>L}dy}.
\ea
$$

The first and the third integrals above are evaluated in the region $|y|>x$. Since $x>L$ and the support of $m(t,\cdot)+\omega+\gamma/2$ is in $[-L,L]$, these integrals vanish. Therefore, $u(t,x)+\omega+\gamma/2=0$ with $|x|>L$ if and only if
$$
e^{-x}\int_{-x}^xe^{y}\left[m(t,y)+\omega+\f{\gamma}{2}\right]dy=0\Longleftrightarrow \int_{-x}^xe^{y}\left[m(t,y)+\omega+\f{\gamma}{2}\right]dy=0,
$$
and, as a consequence,
$$
\int_\R e^{y}\left[m(t,y)+\omega+\f{\gamma}{2}\right]dy=\lim_{x\rightarrow\infty}\int_{-x}^xe^{y}\left[m(t,y)+\omega+\f{\gamma}{2}\right]dy=0.
$$

Let us now assume that $x<-L$. A similar calculation yields
$$
\ba{lcl}
2u(t,x)+2\omega+\gamma&=&\ds{\int_{\R}e^{-|x-y|}\left[m(t,y)+\omega+\f{\gamma}{2}\right]dy=e^{x}\int_{-\infty}^x e^{-y}\underbrace{\left[m(t,y)+\omega+\f{\gamma}{2}\right]}_{=0,\,\text{since}\,y<-L}dy}\\
\\
&+&\ds{e^{x}\int_{x}^{-x} e^{-y}\left[m(t,y)+\omega+\f{\gamma}{2}\right]dy+e^{x}\int_{-x}^\infty e^{-y}\underbrace{\left[m(t,y)+\omega+\f{\gamma}{2}\right]}_{=0,\,\text{since}\,y>L}dy}.
\ea
$$

An analogous argumentation leads to the conclusion
$$
\int_\R e^{-y}\left[m(t,y)+\omega+\f{\gamma}{2}\right]dy=\lim_{x\rightarrow\infty}\int^{-x}_xe^{-y}\left[m(t,y)+\omega+\f{\gamma}{2}\right]dy=0.
$$

Conversely, if $m(t,\cdot)+\omega+\gamma/2$ has support compact, for some $L>0$, we would have $m(t,x)+\omega+\gamma/2=0$ provided that $|x|>L$.

If we assume $x$ larger than $L$, we obtain
$$
\ba{lcl}
2u(t,x)+2\omega+\gamma&=&\ds{\int_\R e^{-|x-y|}\left[m(t,y)+\omega+\f{\gamma}{2}\right]dy=e^{-x}\int_{-\infty}^{-L} e^{y}\underbrace{\left[m(t,y)+\omega+\f{\gamma}{2}\right]}_{=0,\,\text{since}\,y<-L}dy}\\
\\
&+&\ds{e^{-x}\int_{-L}^L e^{y}\left[m(t,y)+\omega+\f{\gamma}{2}\right]dy+e^{x}\int^{\infty}_L e^{-y}\underbrace{\left[m(t,y)+\omega+\f{\gamma}{2}\right]}_{=0,\,\text{since}\,y>L}dy},
\ea
$$
which gives
\bb\label{2.0.3}
u(t,x)+\omega+\f{\gamma}{2}=\f{1}{2}e^{-x}\int_{-L}^L e^{y}\left[m(t,y)+\omega+\f{\gamma}{2}\right]dy.
\ee
A similar procedure reads, for $-x>L$,
\bb\label{2.0.4}
u(t,x)+\omega+\f{\gamma}{2}=\f{1}{2}e^{x}\int_{-L}^L e^{-y}\left[m(t,y)+\omega+\f{\gamma}{2}\right]dy.
\ee

If \eqref{2.0.2} holds and $m(t,x)+\omega+\gamma/2=0$ whenever $|x|>L$, then
$$
\int_{-L}^L e^{\pm x}\left[m(t,x)+\omega+\f{\gamma}{2}\right]dx=\int_{-\infty}^\infty e^{\pm x}\left[m(t,x)+\omega+\f{\gamma}{2}\right]dx=0.
$$
These observations, jointly with \eqref{2.0.3} and \eqref{2.0.4}, implies that $u(t,x)+\omega+\gamma/2$ when $|x|>L$, which concludes the proof.
\end{proof}

The next result is a trivial and immediate consequence of Proposition \ref{prop2.1}, which is spotlighted due to its crucial relevance in the proof of Theorem \ref{teo1.2}.
\begin{corollary}\label{cor2.1}
Assuming the conditions in Proposition \ref{prop2.1}, we have the identity
$$
\f{d}{dt}\int_\R e^{\pm x}\left[m(t,x)+\omega+\f{\gamma}{2}\right]dx=0.
$$
\end{corollary}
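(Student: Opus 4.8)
The plan is to read the corollary straight off Proposition \ref{prop2.1}. The point is that in the situation where this corollary will be used one has, on top of the hypotheses of Proposition \ref{prop2.1}, that $x\mapsto u(t,x)+\omega+\gamma/2$ has compact support for \emph{each} $t$ in the interval of existence of the solution; this is exactly the scenario of item (1) of Theorem \ref{teo1.2}, and by Theorem \ref{teo1.1} it is compatible with $m(t,\cdot)+\omega+\gamma/2$ staying compactly supported for all such $t$. So the conclusion will be obtained not from any new computation but simply by observing that the function being differentiated is already identically zero.

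First I would check that the object in the statement is a genuine function of $t$. By Theorem \ref{teo1.1}, for each $t$ in the lifespan the function $x\mapsto m(t,x)+\omega+\gamma/2$ is supported in a compact interval, so both integrals $\int_\R e^{\pm x}[m(t,x)+\omega+\gamma/2]\,dx$ are finite; continuity (indeed smoothness) in $t$ follows from the local well-posedness of Lemma \ref{lema2.1} together with the flow representation of Lemma \ref{lema2.2}. Hence $t\mapsto\int_\R e^{\pm x}[m(t,x)+\omega+\gamma/2]\,dx$ is well defined on the whole interval of existence, and the left-hand side of the claimed identity makes sense.

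Next I would invoke Proposition \ref{prop2.1} at each fixed $t$: since $x\mapsto u(t,x)+\omega+\gamma/2$ is compactly supported, the implication ``compact support $\Rightarrow$ \eqref{2.0.2}'' of that proposition gives $\int_\R e^{\pm x}[m(t,x)+\omega+\gamma/2]\,dx=0$. Letting $t$ range over the lifespan, this says precisely that $t\mapsto\int_\R e^{\pm x}[m(t,x)+\omega+\gamma/2]\,dx$ is the zero function, so its derivative vanishes identically, which is the assertion.

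I do not expect any genuine obstacle here — this is why it is a trivial consequence of Proposition \ref{prop2.1}. The only thing to be careful about is the reading of ``the conditions in Proposition \ref{prop2.1}'': they must be understood to include compact support of $u(t,\cdot)+\omega+\gamma/2$ for each $t$ (equivalently, via Proposition \ref{prop2.1} itself, the vanishing of the integrals in \eqref{2.0.2}), since otherwise the identity is false. Indeed, the substantive computation — differentiating the same integral under the integral sign, substituting $m_t=-2\omega u_x-um_x-2u_xm-\gamma u_{xxx}$, and integrating by parts (the boundary terms disappearing by compact support) — yields, after noting that compact support of $u(t,\cdot)+\omega+\gamma/2$ together with $u_0\in H^4$ forces $\omega+\gamma/2=0$, the value $\int_\R e^{x}u^2\,dx+\frac{1}{2}\int_\R e^{x}u_x^2\,dx$ for the $+$ sign, which is strictly positive unless $u\equiv0$. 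That computation is naturally postponed to the proof of Theorem \ref{teo1.2}, where comparing it with the present corollary forces $u\equiv0$.
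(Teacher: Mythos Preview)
Your proposal is correct and matches the paper's approach exactly: the paper states the corollary as ``a trivial and immediate consequence of Proposition \ref{prop2.1}'', and your argument---that the integral vanishes for every $t$ by Proposition \ref{prop2.1} and hence its $t$-derivative is zero---is precisely this. Your reading of ``the conditions in Proposition \ref{prop2.1}'' as including compact support of $u(t,\cdot)+\omega+\gamma/2$ for each $t$ is the intended one, and your forward-looking remark about the actual value of the derivative is the content of Proposition \ref{prop2.2}.
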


Let $f$ be an integrable function with compact support. The following relations will be of great importance in the next result:
\bb\label{2.0.5}
\int_\R e^{\pm x}f'(x)dx=\mp \int_\R e^{\pm x}f(x)dx.
\ee
\begin{proposition}\label{prop2.2}
If $u$ is a solution of \eqref{1.0.2} such that $x\mapsto u(t,x)+\omega+\gamma/2$ is compactly supported, then
$$
\f{d}{dt}\int_\R e^{\pm x}\left[m+\omega+\f{\gamma}{2}\right]dx=\int_\R e^{\pm x}\left[\left(u+\omega+\f{\gamma}{2}\right)^2+\f{u_x^2}{2}\right]dx.
$$
\end{proposition}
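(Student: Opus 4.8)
The plan is to differentiate the integral in $t$, insert the evolution equation for $m$, and then integrate by parts; the whole point will be to arrange every resulting term as the integral of $e^{\pm x}$ against the derivative of a compactly supported function, so that \eqref{2.0.5} applies with no boundary contribution at $\pm\infty$.

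First I would abbreviate $c:=\omega+\gamma/2$ and set $v:=u+c$. The hypothesis says $v(t,\cdot)$ is compactly supported for every $t$, hence so are $v_x=u_x$, $v_{xx}=u_{xx}$ and $m+c=u-u_{xx}+c=v-v_{xx}$; moreover Lemma \ref{lema2.2} shows the support of $v(t,\cdot)$ stays inside a bounded set on compact $t$-intervals, which legitimizes differentiation under the integral sign. Thus
\[
\frac{d}{dt}\int_\R e^{\pm x}\Big(m+\omega+\frac{\gamma}{2}\Big)dx=\int_\R e^{\pm x}m_t\,dx ,
\]
and into the right-hand side I substitute $m_t=-2\omega u_x-um_x-2u_xm-\gamma u_{xxx}$ from \eqref{1.0.2}.

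Next comes the algebra. Expanding $m=u-u_{xx}$ yields the identity $um_x+2u_xm=\p_x\big(\tfrac32u^2-uu_{xx}-\tfrac12u_x^2\big)$. This primitive is \emph{not} compactly supported, because $u\to-c$ at infinity, but it differs from the compactly supported function $\tfrac32v^2-3cv-vv_{xx}+cv_{xx}-\tfrac12v_x^2$ only by the additive constant $\tfrac32c^2$, which $\p_x$ kills; likewise $u_x=v_x$ and $u_{xxx}=v_{xxx}$ are derivatives of compactly supported functions. So after this rewriting each summand of $\int e^{\pm x}m_t\,dx$ has the form $\int e^{\pm x}\p_x(\text{compactly supported})$, and \eqref{2.0.5} can be applied term by term. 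Collecting, the pieces linear in $v$ assemble into the coefficient $2\omega+\gamma-2c=0$ (once one also uses $\int e^{\pm x}v_{xx}\,dx=\int e^{\pm x}v\,dx$, which is \eqref{2.0.5} twice — equivalently the relation $\int e^{\pm x}(m+\omega+\gamma/2)\,dx=0$ furnished by Proposition \ref{prop2.1}), and therefore drop out entirely.

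What remains is $\pm\int_\R e^{\pm x}\big(\tfrac32v^2-vv_{xx}-\tfrac12v_x^2\big)dx$; substituting $\int e^{\pm x}vv_{xx}\,dx=\tfrac12\int e^{\pm x}v^2\,dx-\int e^{\pm x}v_x^2\,dx$ (the cross term picks up the factor $(\pm1)^2=1$, so this holds for both signs) this collapses to $\pm\int_\R e^{\pm x}\big(v^2+\tfrac12v_x^2\big)dx$, which, since $v^2+\tfrac12v_x^2=(u+\omega+\gamma/2)^2+u_x^2/2$, is the asserted identity (with the sign on the right read off the exponent; what is needed afterwards is only that the right-hand side is a nonzero multiple of a quantity that is nonnegative and vanishes exactly when $u+\omega+\gamma/2\equiv0$). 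The genuine difficulty throughout is purely one of bookkeeping: since $u$ does not decay but tends to the nonzero constant $-\omega-\gamma/2$, one must pass consistently to $v=u+\omega+\gamma/2$ and strip the constant $\tfrac32c^2$ off the quadratic primitive \emph{before} integrating by parts, otherwise non-vanishing boundary terms at infinity spoil the computation; justifying the differentiation under the integral sign via the flow of Lemma \ref{lema2.2} is the only other point that needs care.
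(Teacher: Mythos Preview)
Your proof is correct and follows essentially the same route as the paper: pass to the compactly supported shift $v=\tilde u=u+\omega+\gamma/2$, write $m_t$ in divergence form with compactly supported densities, and apply \eqref{2.0.5}. The only cosmetic difference is that the paper groups the nonlinearity as $-\p_x[\cdots]+\p_x^3[\cdots]$ (so one and three applications of \eqref{2.0.5} both produce the factor $\mp$ and the cancellation is immediate), whereas you keep a single $\p_x$ and finish with one extra integration by parts on $\int e^{\pm x}vv_{xx}\,dx$; both computations land on $\pm\int_\R e^{\pm x}\big(v^2+\tfrac12 v_x^2\big)\,dx$, and you correctly flag the missing $\pm$ in the displayed statement.
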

\begin{proof} 
Let us define $\tilde{u}(t,x):=u(t,x)+\omega+\gamma/2$, for each $(t,x)\in[0,T)\times\R$, and $\tilde{m}=m+\omega+\gamma/2$. Then $x\mapsto u(t,x)+\omega+\gamma/2$ has compact support if and only if $\tilde{u}$ is compactly supported. Therefore,
$$
m_t=-um_x-2u_xm-\gamma u_{xxx}-2\omega u_x
$$
if and only if
\bb\label{2.0.6}
\ba{lcl}
\tilde{m}_t&=&\ds{-\tilde{u}\tilde{m}_x-2\tilde{u}_x\tilde{m}+\left(\omega+\f{3}{2}\gamma\right)\tilde{u}_{x}-\left(\omega+\f{3}{2}\gamma\right)\tilde{u}_{xxx}}\\
\\
&=&\ds{-\p_x\left[\f{3}{2}\tilde{u}^2+\f{\tilde{u}_x}{2}-\left(\omega+\f{3}{2}\gamma\right)\tilde{u}\right]+\p_x^3\left[\f{\tilde{u}^2}{2}-\left(\omega+\f{3}{2}\gamma\tilde{u}\right)\right]}.
\ea
\ee
Taking \eqref{2.0.6} and \eqref{2.0.5} into account, we obtain
$$
\ba{l}
\ds{\f{d}{dt}\int_\R e^{x}\left[m+\omega+\f{\gamma}{2}\right]dx}=\ds{\f{d}{dt}\int_\R e^{x}\tilde{m}dx=\int_\R e^{x}\tilde{m}_tdx=\int_\R e^{\pm x}\p_x^3\left[\f{\tilde{u}^2}{2}-\left(\omega+\f{3}{2}\gamma\tilde{u}\right)\right]dx}\\
\\
-\ds{\int_\R e^{\pm x}\p_x\left[\f{3}{2}\tilde{u}^2+\f{\tilde{u}_x}{2}-\left(\omega+\f{3}{2}\gamma\right)\tilde{u}\right]dx}=\ds{\pm\int_\R e^{\pm x}\left(\tilde{u}^2+\f{\tilde{u}_x^2}{2}\right)dx,}
\ea
$$
and the result is obtained noting that $\tilde{u}=u+\omega+\gamma/2$ and $\tilde{u}_x=u_x$.
\end{proof}

\section{Proof of theorems \ref{teo1.2} and \ref{teo1.3}}\label{sec4}

We recall that the for solutions of the DGH equation decaying to $0$ at infinity, the functional 
\bb\label{3.0.1}
{\cal H}(t):=\f{1}{2}\int_\R\left[u(t,x)^2+u_{x}(t,x)^2\right]dx=\f{1}{2}\|u(t,\cdot)\|^2_{H^1(\R)}
\ee
is independent of time \cite{dgh}, and it reflects the conservation of energy of the solutions of the equation. Also, by the Sobolev Embedding Theorem, see \cite[p. 47]{linaresbook} and \cite[p. 317]{taylor}, if $u\in H^s(\R)$, with $s>1/2$, then $u$ is bounded, continuous, and the inequality $\|u\|_{L^\infty(\R)}\leq c\|u\|_{H^s(\R)}$ holds, for some $c>0$.

\begin{proposition}\label{prop3.1}
Assume that $u_0\in H^s(\R)$, $s>3/2$, is an initial data of the problem \eqref{1.0.2}, with corresponding solution $u$ and lifespan $T>0$. If there exists some $t_0\in[0,T)$ such that $u\big|_{\{t_0\}\times\R}\equiv0$, then $u\equiv0$.
\end{proposition}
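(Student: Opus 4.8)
The plan is to use nothing more than the conservation of the energy functional $\mathcal{H}$ introduced in \eqref{3.0.1}. By Lemma \ref{lema2.1}, the solution satisfies $u\in C^0\bigl([0,T);H^s(\R)\bigr)$ with $s>3/2>1$, so $u(t,\cdot)\in H^1(\R)$ for every $t\in[0,T)$; in particular $\mathcal{H}(t)=\tfrac12\|u(t,\cdot)\|^2_{H^1(\R)}$ is finite for all such $t$, and, as recalled at the beginning of this section, it is independent of $t$.

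Next I would simply evaluate at $t=t_0$. Since $u\big|_{\{t_0\}\times\R}\equiv0$, we get $\mathcal{H}(t_0)=0$, and by the invariance of $\mathcal{H}$ this forces $\mathcal{H}(t)=0$, hence $\|u(t,\cdot)\|_{H^1(\R)}=0$, for every $t\in[0,T)$. Thus $u(t,\cdot)$ is the zero element of $H^1(\R)$, i.e. $u(t,x)=0$ for almost every $x\in\R$. Invoking the Sobolev embedding $H^1(\R)\hookrightarrow C^0_b(\R)$ recalled above, $u(t,\cdot)$ is continuous, so actually $u(t,x)=0$ for \emph{every} $x\in\R$. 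As $t\in[0,T)$ was arbitrary, $u\equiv0$, which is the claim.

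The only point requiring a word of care — and it is a matter of invoking the correct hypothesis rather than a real obstacle — is the legitimacy of using the conservation of $\mathcal{H}$ at the regularity level $s>3/2$, where the momentum $m=u-u_{xx}$ is merely a distribution when $s<2$. This is precisely what the well-posedness theory together with the (bi-)Hamiltonian structure of the DGH equation recalled in the Introduction provides (see \cite{dgh}), and it is taken for granted here; everything else in the argument is immediate. I would also remark that this same reasoning is exactly the mechanism that will be reused, in the periodic setting and under the abstract hypotheses of theorems \ref{teo1.3}, \ref{teo1.4} and \ref{teo1.5}, to reduce a unique-continuation statement to the vanishing of a conserved Sobolev-type norm.
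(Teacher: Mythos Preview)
Your proof is correct and follows essentially the same approach as the paper: both argue that the conservation of $\mathcal{H}(t)=\tfrac12\|u(t,\cdot)\|_{H^1(\R)}^2$ forces $\mathcal{H}\equiv0$ once $u(t_0,\cdot)\equiv0$, and then invoke the Sobolev embedding $H^1(\R)\hookrightarrow L^\infty(\R)$ to conclude $u\equiv0$. Your version is simply a bit more explicit about the embedding and the regularity caveat, but there is no substantive difference.
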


\begin{proof}
Let us consider the functional ${\cal H}$ given by \eqref{3.0.1}. Its time independence means that ${\cal H}(t)={\cal H}(0)$, for all $t\in[0,T)$. If $x\mapsto u(t_0,\cdot)\equiv0$, then ${\cal H}(t_0)=0$, that is, ${\cal H}(t)=0$, for any $t\in[0,T)$. Therefore, $\|u\|_{L^\infty(\R)}=0$, as well as $u$.
\end{proof}

\begin{proposition}\label{prop3.2}
Let $a$ and $b$ real numbers such that $a<b$, and $S:\R\rightarrow\R$ given by $S(y)=\sign{(a-y)}e^{-|a-y|}-\sign{(b-y)}e^{-|b-y|}$. Then $S\in L^1(\R)$ and $S(y)>0$ for all $y\notin[a,b]$. 
\end{proposition}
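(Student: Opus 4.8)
The plan is a direct case analysis on the position of $y$ relative to the interval $[a,b]$, using that $|a-y|=|y-a|$ and that once the region is fixed the sign function collapses the absolute values into plain exponentials. As an orienting remark one may note the identity $\sign{(c-y)}e^{-|c-y|}=\frac{d}{dy}e^{-|y-c|}$ for $y\neq c$, so that $S$ is the (distributional) derivative of $y\mapsto e^{-|y-a|}-e^{-|y-b|}$, a function with corners at $a$ and $b$; but the elementary casework below is self-contained.

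For the positivity claim, split $\R\setminus[a,b]$ into its two pieces. If $y<a$, then $a-y>0$ and $b-y>0$, hence $\sign{(a-y)}=\sign{(b-y)}=1$, $|a-y|=a-y$, $|b-y|=b-y$, and so $S(y)=e^{y-a}-e^{y-b}=e^{y}\bigl(e^{-a}-e^{-b}\bigr)$. Since $a<b$ forces $e^{-a}>e^{-b}$, this is strictly positive. Symmetrically, if $y>b$ then $a-y<0$ and $b-y<0$, whence $S(y)=-e^{a-y}+e^{b-y}=e^{-y}\bigl(e^{b}-e^{a}\bigr)>0$. These two cases exhaust $y\notin[a,b]$, giving $S(y)>0$ there.

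For integrability I would reuse the same two explicit formulas. On $(-\infty,a)$ one gets $\int_{-\infty}^{a}|S(y)|\,dy=\bigl(e^{-a}-e^{-b}\bigr)\int_{-\infty}^{a}e^{y}\,dy=1-e^{a-b}<\infty$, and likewise $\int_{b}^{\infty}|S(y)|\,dy=\bigl(e^{b}-e^{a}\bigr)\int_{b}^{\infty}e^{-y}\,dy=1-e^{a-b}<\infty$. On the compact interval $[a,b]$ each summand is bounded in modulus by $1$, so $|S|\le 2$ there and $S$ is integrable on $[a,b]$ by boundedness. Adding the three contributions yields $S\in L^1(\R)$ (indeed $\|S\|_{L^1(\R)}$ is finite and can be computed explicitly if desired).

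I do not expect a genuine obstacle: the only points needing care are the sign bookkeeping when removing the absolute values in each region, and the harmless ambiguity of $\sign{(0)}$ at $y=a$ and $y=b$, which is irrelevant since the positivity statement concerns $y\notin[a,b]$ and a two-point set is negligible for the $L^1$ assertion.
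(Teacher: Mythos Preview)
Your proof is correct and follows the same approach as the paper's: a two-case analysis on $y<a$ versus $y>b$, collapsing the sign functions and comparing the resulting exponentials. Your version is in fact more explicit---you write out the formulas $S(y)=e^{y}(e^{-a}-e^{-b})$ and $S(y)=e^{-y}(e^{b}-e^{a})$ and then compute the $L^1$ contributions on each piece, whereas the paper simply asserts that $S\in L^1(\R)$ is clear and argues the sign comparison qualitatively.
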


\begin{proof}
It is clear that $S\in L^1(\R)$. What remains to be proved is that $S(y)>0$.

Since $y\notin[a,b]$, we have only two possible possibilities: $y<a$ or $y>b$. If $y<a$, then $-|a-y|>-|b-y|$, $e^{-|a-y|}>e^{-|b-y|}$ and both $\sign{(a-y)}$ and $\sign{(b-y)}$ are positive. Therefore, $-\sign{(b-y)}e^{-|b-y|}>-\sign{(a-y)}e^{-|a-y|}$.

In the second case, we have $e^{-|b-y|}>e^{-|a-y|}$ and both $-\sign{(a-y)}$ and $-\sign{(b-y)}$ are positive, which lead us to the same conclusion.
\end{proof}

Let us now prove Theorem \ref{teo1.2}. We would like to observe that $2\Rightarrow 1$ as well as $2\Rightarrow 3$ and for this reason we only need to prove the implications $1\Rightarrow 2$ and $3\Rightarrow 2$.

{\bf Proof that $1\Rightarrow 2$} Assume that $x\mapsto u(t,x)+\omega+\gamma/2$ has compact support. From Proposition \ref{prop2.2} and Corollary \ref{cor2.1} we have
$$
0=\f{d}{dt}\int_\R e^{\pm x}\left[m+\omega+\f{\gamma}{2}\right]dx=\int_\R e^{\pm x}\left[\left(u+\omega+\f{\gamma}{2}\right)^2+\f{u_x^2}{2}\right]dx,
$$
which implies $u+\omega+\gamma/2\equiv0$, that is, $u(t,x)=-(\omega+\gamma/2)$, everywhere the solution is defined. In particular, evaluating the solution at $t=0$ we are forced to conclude that $u_0(x)=-(\omega+\gamma/2)$. Since $u_0\in H^4(\R)$, then $u_0(x)\rightarrow0$ as $|x|\rightarrow\infty$, implying that $\omega+\gamma/2=0$ and, consequently, $u\equiv0$.

{\bf Proof that $3\Rightarrow 2$} It is enough to prove that $u\equiv0$.

Assume that $\Omega\subseteq[0,T)\times\R$ is an open set such that $(t,x)\in\Omega\Rightarrow\,u(t,x)=0$. Then, there exists real numbers $a,\,b$ and $t_0$ such that $\{t_0\}\times[a,b]\subseteq\Omega$, but $\{t_0\}\times\R\subsetneqq\Omega$. Otherwise,  the result is an immediate consequence of Proposition \ref{prop3.1}.

Let $f,\,F:\R\rightarrow\R$ given by 
$f(x)=u(t_0,x)^2+u_x^2(t_0,x)/2$ and $F(x)=(\p_x\Lambda^{-2}f)(x)$. Since $u(t_0,\cdot)$ is continuously differentiable, bounded, and both $u(t_0,\cdot)$ and $u_x(t_0,\cdot)$ are integrable, then $f\in C_b^0(\R)\cap L^1(\R)$ and $F\in C_b^0(\R)\cap C^1(\R)\cap L^1(\R)$. In particular,
$$
F(x)=\int_{\R}\p_x\left(\f{1}{2}e^{-|x-y|}f(y)\right)dy=\f{1}{2}\int_\R\left[-\sign{(x-y)}e^{-|x-y|}f(y)\right]dy.
$$

We firstly note that $f\equiv0$ if and only if $u(t_0,\cdot)\equiv0$. Therefore, if we show that $f$ vanishes identically, then Proposition \ref{prop3.1} implies that $u$ vanishes everywhere.

Let us proceed by contradiction, assuming that $f$ is not identically $0$. This means that $u(t_0,y)\neq0$ for some $y$, which implies the existence of an open set $V\subseteq\R$ such that $y\in V$ and $f(y)>0$ for all $y\in V$.

Let $S$ be the function given in Proposition \ref{prop3.2}. We note that
\bb\label{3.0.2}
\int_\R S(y)f(y)dy\geq\int_{V}S(y)f(y) >0.
\ee
Also, we have the inequalities
$$
F(b)=\f{1}{2}\int_\R\left[-\sign{(b-y)}e^{-|b-y|}f(y)\right]dy\geq\f{1}{2}\int_\R\left[-\sign{(a-y)}e^{-|a-y|}f(y)\right]dy=F(a),
$$
and, therefore,
\bb\label{3.0.3}
0\leq\f{1}{2}\int_\R\underbrace{\left[\sign{(a-y)}e^{-|a-y|}-\sign{(b-y)}e^{-|b-y|}\right]}_{S(y)}f(y)dy=F(b)-F(a).
\ee

Assume that $F(b)=F(a)=0$. If this is true, then we reach to a contradiction between \eqref{3.0.2} and \eqref{3.0.3}. The proof that $F(b)=F(a)=0$ is given in the Proposition \ref{prop3.3} below.\hspace{3.5cm}$\square$

\begin{proposition}\label{prop3.3}
Let $F$ be the function given above. Then $F\big|_{[a,b]}\equiv0$. In particular, $F(a)=F(b)=0$.
\end{proposition}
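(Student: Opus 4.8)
The goal is to show that $F$, the primitive-type convolution $F=\p_x\Lambda^{-2}f$ with $f(x)=u(t_0,x)^2+u_x(t_0,x)^2/2\ge 0$, vanishes on the whole interval $[a,b]$ where $u(t_0,\cdot)\equiv 0$. The natural idea is to exploit the explicit kernel: for $x\in[a,b]$,
$$
2F(x)=\int_{\R}\bigl(-\sign(x-y)\bigr)e^{-|x-y|}f(y)\,dy
      =e^{x}\!\int_{x}^{\infty}e^{-y}f(y)\,dy-e^{-x}\!\int_{-\infty}^{x}e^{y}f(y)\,dy .
$$
First I would observe that on $[a,b]$ we have $u(t_0,x)=0$ and hence also $u_x(t_0,x)=0$, so $f\equiv 0$ there. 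The plan is therefore to show that the two one-sided integrals are, separately, constant in $x$ on $[a,b]$ and in fact equal to each other; differentiating $2F(x)$ with respect to $x$ on $(a,b)$ gives
$$
2F'(x)=e^{x}\!\int_{x}^{\infty}e^{-y}f(y)\,dy+e^{-x}\!\int_{-\infty}^{x}e^{y}f(y)\,dy-2f(x)
       =2F(x)+2\,(\Lambda^{-2}f)(x)-\text{(lower order)},
$$
but cleaner is to use the identity $\p_x^2\Lambda^{-2}f=\Lambda^{-2}f-f$, valid since $F\in C^1$ and $\Lambda^{-2}f\in C^2$. Writing $G=\Lambda^{-2}f$, we have $G''=G-f$ and $G'=F$, so on the open interval $(a,b)$, where $f=0$, the function $G$ solves the ODE $G''=G$. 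Hence $G(x)=A e^{x}+Be^{-x}$ on $[a,b]$ for constants $A,B$, and $F=G'=A e^{x}-Be^{-x}$ there.

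Next I would pin down $A$ and $B$ using the sign information and the global integrability/positivity of $f$. Since $f\ge 0$, the representation $2G(x)=\int_\R e^{-|x-y|}f(y)\,dy$ shows $G\ge 0$ everywhere, and $2F(x)=2G'(x)=e^{x}\int_{x}^{\infty}e^{-y}f(y)\,dy-e^{-x}\int_{-\infty}^{x}e^{y}f(y)\,dy$. The first term is a non-increasing function of $x$ (its derivative is $e^{x}\int_x^\infty e^{-y}f\,dy - f(x)$, but more simply the quantity $e^{x}\int_x^\infty e^{-y}f(y)dy$ has derivative $e^x\int_x^\infty e^{-y}f\,dy-f(x)$; to avoid this, note directly that $\int_x^\infty e^{-y}f(y)dy$ decreases while $e^x$ increases — so I would instead argue via monotonicity of $P(x):=e^{x}\int_x^{\infty}e^{-y}f(y)\,dy$ and $Q(x):=e^{-x}\int_{-\infty}^{x}e^{y}f(y)\,dy$: $P$ is non-increasing and $Q$ is non-decreasing on $\R$, each being of the form $\int g(x,y)f(y)dy$ with $g(\cdot,y)$ monotone). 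Consequently $2F=P-Q$ is non-increasing on all of $\R$. On $[a,b]$ we also have $F=Ae^x-Be^{-x}$; combining "$F$ non-increasing" with this exponential form forces $A\le 0$ and $B\le 0$.

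Finally I would use the two-sided global decay to eliminate the exponentials. Because $f\in L^1(\R)$, one has $P(x)=e^x\int_x^\infty e^{-y}f(y)dy\to 0$ as $x\to+\infty$ and $Q(x)\to 0$ as $x\to-\infty$; moreover $P,Q\ge 0$ always, so $2F(x)=P(x)-Q(x)$ satisfies $\limsup_{x\to+\infty}2F(x)\le 0$ and $\liminf_{x\to-\infty}2F(x)\ge 0$. Since $F$ is non-increasing on $\R$, these two facts give $F\ge 0$ on $(-\infty,a]$ and $F\le 0$ on $[b,+\infty)$, hence $F(a)\ge 0\ge F(b)$; but monotonicity also gives $F(a)\ge F(b)$ trivially and, crucially, $F(a)\le F(x)\le F(b)$ is false — rather $F(a)\ge F(b)$. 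The clean finish: $F=Ae^x-Be^{-x}$ on $[a,b]$ with $A,B\le 0$ means $F$ is a sum of a non-positive increasing piece $Ae^x$ and a non-positive-times-$(-e^{-x})$, i.e. $-Be^{-x}\ge 0$ decreasing; to get $F\equiv 0$ I would show $A=B=0$ by matching with the outer behaviour: evaluating the constants $A,B$ from $G$ at $x=a$ and $x=b$ and using $P(a)=Q(a)+2F(a)$ together with $P,Q\ge0$ and their limits, one forces $F(a)=F(b)=0$, whence $A=B=0$ and $F\equiv 0$ on $[a,b]$.

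The main obstacle I anticipate is making the monotonicity-and-limits argument airtight without circularity: one must be careful that "$F$ non-increasing on $\R$" genuinely follows from $f\ge 0$ (it does, since $P$ is non-increasing and $Q$ non-decreasing, each by direct inspection of the integrands, using that $y\mapsto f(y)\ge0$), and then combine it with the local ODE form $F=Ae^x-Be^{-x}$ on $[a,b]$ and the vanishing at $\pm\infty$ of $P$ and $Q$ respectively. Once these three ingredients — sign of $f$, the ODE on the gap, and integrability at infinity on \emph{both} sides — are in place, the conclusion $F\equiv 0$ on $[a,b]$ (in particular $F(a)=F(b)=0$) drops out, and this is exactly what is needed to contradict \eqref{3.0.2}–\eqref{3.0.3} and finish the proof of Theorem \ref{teo1.2}.
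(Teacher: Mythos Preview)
Your approach has a genuine and fatal gap: you never use the equation, and in particular you never use that $u_t(t_0,\cdot)$ vanishes on $[a,b]$. The conclusion $F\big|_{[a,b]}\equiv0$ simply does not follow from the ingredients you list (non-negativity of $f$, $f\equiv0$ on $[a,b]$, and $f\in L^1$). Indeed, with $G=\Lambda^{-2}f$ and $F=G'$ one has $F'=G''=G-f$; on $(a,b)$ this gives $F'=G$. But $G(x)=\tfrac12\int_\R e^{-|x-y|}f(y)\,dy>0$ for every $x$ whenever $f\ge0$ is not identically zero, since the kernel is strictly positive. Hence under the contradiction hypothesis $f\not\equiv0$, the function $F$ is \emph{strictly increasing} on $[a,b]$ and therefore cannot vanish identically there. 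Your claimed monotonicity ``$P$ non-increasing, $Q$ non-decreasing, hence $2F=P-Q$ non-increasing on $\R$'' is false: $P'(x)=P(x)-f(x)$, which is strictly positive wherever $f=0$ and $P>0$ (e.g.\ on all of $(a,b)$). A simple counterexample to the whole strategy is any non-negative $f\in L^1(\R)$ supported outside $[a,b]$ and not identically zero; then $F$ is strictly monotone on $[a,b]$.

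What the paper does instead is short and uses the PDE in its nonlocal form. From
\[
u_t+(u+2\omega)u_x+\p_x\Lambda^{-2}\!\left(u^2+\tfrac{u_x^2}{2}\right)=0
\]
one reads off, at $t=t_0$,
\[
F(x)=\p_x\Lambda^{-2}\!\left(u^2+\tfrac{u_x^2}{2}\right)(t_0,x)=-\Big(u_t+(u+2\omega)u_x\Big)(t_0,x).
\]
Since $u$ vanishes on the open set $\Omega\supseteq\{t_0\}\times[a,b]$, both $u$ and $u_t$, $u_x$ vanish on $\{t_0\}\times[a,b]$, whence $F(x)=0$ for all $x\in[a,b]$. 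The crucial input you are missing is precisely $u_t(t_0,x)=0$ on $[a,b]$, which comes from $u$ vanishing on a \emph{space--time} neighbourhood, not just on a spatial interval at a single time.
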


\begin{proof} We note that the equation in \eqref{1.0.2}, with $\gamma=-2\omega$, satisfies the identity
$$
m_t+(1-\p_x^2)(uu_x+2\omega u_x)+\p_x\left(u^2+\f{u_x^2}{2}\right)=\Lambda^2\left[u_t+(uu_x+2\omega u_x)+\p_x\Lambda^{-2}\left(u^2+\f{u_x^2}{2}\right)\right].
$$
Therefore, on the solutions of the equation in \eqref{1.0.2}, we have
$$
\p_x\Lambda^{-2}\left(u^2+\f{u_x^2}{2}\right)(t,x)=-\Bigg(u_t+(u+2\omega)u_x\Bigg)(t,x).
$$

We remember that $u$ vanishes on $\Omega$. Evaluating the expression above at $t=t_0$ we have
\bb\label{3.0.4}
F(x)=\p_x\Lambda^{-2}\left(u^2+\f{u_x^2}{2}\right)(t_0,x)=-\Bigg(u_t+(u+2\omega)u_x\Bigg)(t_0,x)
\ee
and then, for any $x\in[a,b]$, we have $F(x)=0$.
\end{proof}

{\bf Proof of Theorem \ref{teo1.3}.} Assume that $u$ has compact support. Then there is an open set $\emptyset\neq\Omega\nsubseteq\supp{u}$ such that $\left.u\right|_{\Omega}\equiv0$. The proof follows the same steps of the previous demonstration $3\Rightarrow 2$ and for this reason is omitted. 

\section{The periodic case}\label{sec5}

We present a more ``to the point'' demonstration of theorems \ref{teo1.4} and \ref{teo1.5}. We point out that this demonstration can also give a different proof to Theorem \ref{teo1.3}.

We begin with noticing the following remarks about the periodic problem \eqref{1.0.3}:
\begin{remark}\label{rem4.1}
Lemma \ref{lema2.1} assures the existence of a local solution $u\in C^0\left([0,T);H^s(\mathbb{S})\right)\cap C^1\left([0,T);H^{s-1}(\mathbb{S})\right)$.
\end{remark}

\begin{remark}\label{rem4.2}
The periodic problem has the functional
$$
{\cal H}(t):=\f{1}{2}\int_\mathbb{S}\left[u(t,x)^2+u_{x}(t,x)^2\right]dx=\f{1}{2}\int_0^1\left[u(t,x)^2+u_{x}(t,x)^2\right]dx=\f{1}{2}\|u(t,\cdot)\|^2_{H^1(\mathbb{S})}
$$
independent of time.
\end{remark}

\begin{remark}\label{rem4.3}
If $u:[0,T)\times\R\rightarrow\R$ is compactly supported, then there exists an open and connected set $\Omega\subsetneqq[0,T)\times\R$ such that $\left.u\right|_{\Omega}\equiv0$.
\end{remark}

\begin{remark}\label{rem4.4}
In line with Remark \ref{rem4.3}, without loss of generality we can assume the existence of $t_0\in(0,T)$ and $0<a<b<1$ such that $\{t_0\}\times[a,b]\subseteq\Omega$.
\end{remark}

\begin{remark}\label{rem4.5}
If $f$ is a non-negative, periodic and continuous function, then
$$(\Lambda^{-2}f)(x)=\int_{0}^1\f{\cosh{(x-y-\lfloor x-y \rfloor-1/2)}}{2\sinh(1/2)}f(y)dy\geq0.$$
In particular, $\Lambda^{-2}f=0$ if and only if $f\equiv0$.
\end{remark}

Let $u$ be a solution of \eqref{1.0.3} with compact support. By Remark \ref{rem4.3} there exists an open and connected set $\Omega$ such that $u$ vanishes on it. We note that $u$ then satisfies the conditions of Theorem \ref{teo1.5} and, accordingly, it is enough to prove the latter result.

Let $t_0$, $a$ and $b$ be the numbers satisfying the condition in Remark \ref{rem4.4}, $f(x)=u(t_0,x)^2+u_x(t_0,x)^2/2$ and $F(x)=(\p_x\Lambda^{-2}f)(x)$.
By Remark \ref{rem4.5}, $F\equiv0$ if and only if $f\equiv0$. 

We note that $F$ satisfies \eqref{3.0.4}, but now the action of the operator $\Lambda^{-2}$ is given as in Remark \ref{rem4.5}. Then we are forced to conclude that $F(b)=F(a)=0$. By the Fundamental Theorem of Calculus jointly with the identity $\p_x^2\Lambda^{-2}=\Lambda^{-2}-1$, the fact that $f(x)=0$, for all $x\in[a,b]$, we have
$$
0=F(b)-F(a)=\int_a^b F'(x)dx=\int_a^b(\Lambda^{-2}f)(x)dx.
$$

By Remark \ref{rem4.5} we conclude that $f\equiv0$ and, therefore, it is a foregone conclusion that $u(t_0,x)=0$. This implies that ${\cal H}(t_0)=0$ and the result is then a consequence of the conservation of the energy of the solutions. In particular, $u_0\equiv0$.

Note, however, that $u_0$ might be compactly supported and not necessarily be zero. In this case, $u(t,\cdot)$, $t>0$, loses the last property.

\section{Application to weakly dissipative versions of the equation}\label{sec6}

Here we show that theorems \ref{teo1.1}--\ref{teo1.5} can be applied to a large class of equations of the CH type. We begin with observing that our results are trivially applicable to the CH equation because if we take $\omega=\gamma=0$ in \eqref{1.0.2} we obtain
\bb\label{4.0.1}
\left\{
\ba{l}
m_t+um_x+2u_xm=0\\
\\
u(0,x)=u_0(x),
\ea
\right.
\ee
which is nothing but an initial value problem involving the CH equation.

Several works \cite{novjmp2013,wei,wujde2009,zhangjmp2015} have dealt with the equation
\bb\label{5.3.1}
\tilde{u}_t-\al^2\tilde{u}_{txx}+ \tilde{\gamma}(\tilde{u}_x-\al^2 \tilde{u}_{xxx})+3\tilde{u}\tilde{u}_x+\tilde{\lambda}(\tilde{u}-\al^2\tilde{u}_{xx})=\al^2(2\tilde{u}_x\tilde{u}_{xx}+\tilde{u}\tilde{u}_{xxx}).
\ee

Defining $\tilde{u}(t,x)=u(t/\al, x/\al)$, $\lambda:=\tilde{\lambda}\al$ and $\gamma:=\tilde{\gamma}\al$ we can transform \eqref{5.3.1} in the equation 
\bb\label{5.3.2}
u_t-u_{txx}+\gamma( u_x-u_{xxx})+3uu_x+\lambda(u-u_{xx}) =(2u_xu_{xx}+uu_{xxx}).
\ee

The parameter $\lambda$ is assumed to be non-negative. If $\lambda=0$ we recover the equation \eqref{1.0.2} with $\omega+\gamma/2=0$ whereas for $\lambda>0$ we have a sort of DGH equation with non-conservative energy functional, as we shall discuss below.

Let us consider the transformation (see \cite{lenjde2013})
\bb\label{5.3.3}
u(t,x)=e^{-\lambda t}u\left(\f{1-e^{-\lambda t}}{\lambda},x\right)=:e^{-\lambda t}v(\tau,\chi).
\ee
Let $n:=e^{-\lambda t}m$. A straightforward calculation yields:
\bb\label{5.3.4}
\ba{lll}
m=e^{-\lambda t}n,& u_t=-\lambda e^{-\lambda t}v+e^{-2\lambda t}v_\tau,& u_x=e^{-\lambda t}v_\chi,\\
\\
&m_t=-\lambda e^{-\lambda t}n+e^{-2\lambda t}n_\tau,&m_x=e^{-\lambda t}n_\chi,
\ea
\ee
where in \eqref{5.3.4} $(\cdot)_\tau$ and $(\cdot)_\chi$ denotes derivatives with respect to the first and the second arguments of the functions involved. Substituting \eqref{5.3.4} and \eqref{5.3.3} into \eqref{5.3.2} we get
\bb\label{5.3.5}
n_\tau+v n_\chi+2v_\chi n+\gamma \eta_\chi=0.
\ee
Moreover, $u_0(x)=u(0,x)=v(0,x)$ and, therefore, the equation \eqref{5.3.2} is transformed, through the change \eqref{5.3.3}, into the DGH equation with $\gamma=-2\omega$. As a consequence, theorems \ref{teo1.1}--\ref{teo1.5} can also be applicable to this case. 

\section{Conserved quantities and unique continuation for solutions}\label{sec7}

In this section we make an in-depth investigation about unique continuation for solutions of the DGH equation. We restrict our analysis to the real case. The situation in the periodic case can be recovered following the same steps as done in Section \ref{sec5} taking the corresponding particularities into account.

In view of our previous results, we restrict our analysis when $\gamma=-2\omega$. With this choice, we can rewrite \eqref{1.0.1} as (see the demonstration of Proposition \ref{prop3.3})
\bb\label{7.1}
u_t+(u+2\omega)u_x=-\p_x\Lambda^{-2}\left(u^2+\f{u_x^2}{2}\right).
\ee

Also, we recall again that the DGH equation, with the restrictions above, has the following conserved quantities
\bb\label{7.2}
{\cal H}(t)=\int_\R\left[u(t,x)^2+u_x(t,x)^2\right]dx
\ee
and
\bb\label{7.3}
{\cal H}_1(t)=\int_\R u(t,x)dx.
\ee

We shall assume mild conditions on a given solution $u$ of \eqref{7.1}. Our assumptions are:

\begin{enumerate}
    \item[{\bf C1}] The solution $u$ is defined on $[0,T)\times\R$, for some $T>0$;
    \item[{\bf C2}] $u$ is a solution conserving \eqref{7.2};
    \item[{\bf C3}] $u$ is a solution conserving \eqref{7.3};
    \item[{\bf C4}] For each $t$ fixed,  $\p_x\Lambda^{-2}\left(u^2+u_x^2/2\right)(t,\cdot)\in C^1$;
    \item[{\bf C5}] There exists numbers $t_0,\,t_1,\,x_0,\,x_1$, with $0<t_0<t_1<T$ and $x_0<x_1$, such that $\Omega:=(t_0,t_1)\times(x_0,x_1)\subseteq [0,T)\times\R$ and $u(t,x)=0$, for all $(t,x)\in\Omega$.
\end{enumerate}

We would like to point out some observations about the conditions above:
\begin{remark}\label{rem7.1}
Condition ${\bf C1}$ requires the existence of a solution, but not its uniqueness. Besides, note that we might eventually have $T=\infty$, but this is unessential.
\end{remark}

\begin{remark}\label{rem7.2}
Conditions ${\bf C2}$ and ${\bf C3}$ enclose a large class of solutions of the DGH equation. This is easily explained by the following formal identities ({\it e.g}, see \cite[Theorem 3.1]{raspajde}):
\bb\label{7.4}
\ba{l}
\ds{\p_t u+\p_x\left(\f{3}{2}u^2-uu_{xx}-\f{u_x^2}{2}-u_{tx}+2\omega u+\gamma u_{xx}\right)}\\
\\
\ds{=u_t-u_{txx}+2\omega u_x+3uu_x+\gamma u_{xxx}-2u_xu_{xx}-uu_{xxx}}
\ea
\ee
and
\bb\label{7.5}
\ba{l}
\ds{\p_t\left(\f{u^2+u_x^2}{2}\right)+\p_x\left(u^3-u^2u_{xx}-uu_{tx}-\f{\gamma}{2}u_x^2+\gamma uu_{xx}+\omega u^2\right)}\\
\\
\ds{=u\left(u_t-u_{txx}+2\omega u_x+3uu_x+\gamma u_{xxx}-2u_xu_{xx}-uu_{xxx}\right)}.
\ea
\ee

Therefore, if $u,u_x,u_{xx},u_{tx}\rightarrow0$ as $x\rightarrow\pm\infty$, from \eqref{7.4} and \eqref{7.5} we obtain, respectively,
$$
\f{d}{dt}\int_\R u=-\left(\f{3}{2}u^2-uu_{xx}-\f{u_x^2}{2}-u_{tx}+2\omega u+\gamma u_{xx}\right)\Big|_{-\infty}^{+\infty}=0
$$
and
$$
\f{d}{dt}\int_\R \left(\f{u^2+u_x^2}{2}\right)=-\left(u^3-u^2u_{xx}-uu_{tx}-\f{\gamma}{2}u_x^2+\gamma uu_{xx}+\omega u^2\right)\Big|_{-\infty}^{+\infty}=0.
$$

It is a foregone conclusion that any solution with mild regularity decaying to $0$ at infinity, jointly with its derivatives up to second order, would fall in the conditions {\bf C2} and {\bf C3}. In particular, solutions $u\in H^s(\R)$, for $s$ sufficiently large, satisfy these conditions. Note that we have just shown the reasons for which \eqref{7.2} and \eqref{7.3} are conserved quantities for the DGH equation. For the periodic case we can recover these results requesting that $u$ and its derivatives have the same values at $(z,t)$, where $z\in\mathbb{Z}$.
\end{remark}

\begin{remark}\label{rem7.3}
In condition {\bf C5} we consider an open rectangle. At first sight, this seems to be very restrictive. This is only an apparent contradiction, since any open set contains at least one (and, therefore, infinite) open rectangles.
\end{remark}

\begin{proposition}\label{prop7.1}
Let $u$ be a solution of \eqref{7.1} satisfying the conditions {\bf C1}, {\bf C4} and {\bf C5}; $a$ and $b$ real numbers such that $a<b$ and $a,b\in(x_0,x_1)$; and, for each $t\in(t_0,t_1)$ fixed, let us define $f_t:\R\rightarrow\R$ by $f_t(x)=\Lambda^{-2}\left(u^2+u_x^2/2\right)(t,x)$ and $F_t(x)=\p_x\Lambda^{-2}\left(u^2+u_x^2/2\right)(t,x)$. Then $\left. F_t\right|_{[a,b]}\equiv0$ as well as $ f_t\equiv0$.
\end{proposition}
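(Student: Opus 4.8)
The plan is to read off the vanishing of $F_t$ directly from the equation \eqref{7.1} on the rectangle where $u$ vanishes, and then to upgrade this to the vanishing of $f_t$ by combining the Helmholtz identity $\p_x^2\Lambda^{-2}=\Lambda^{-2}-1$ with the positivity of the convolution kernel $g(x)=e^{-|x|}/2$. This is essentially the mechanism of Section \ref{sec5}, but here it uses only {\bf C1}, {\bf C4} and {\bf C5}, and in particular is decoupled from the conserved quantities \eqref{7.2}--\eqref{7.3}.

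First I would note that, since $u\equiv0$ on the open rectangle $\Omega=(t_0,t_1)\times(x_0,x_1)$ by {\bf C5}, we also have $u_t\equiv0$ and $u_x\equiv0$ throughout $\Omega$. Evaluating \eqref{7.1} at a point $(t,x)\in\Omega$ therefore gives
$$
F_t(x)=\p_x\Lambda^{-2}\left(u^2+\f{u_x^2}{2}\right)(t,x)=-\Big(u_t+(u+2\omega)u_x\Big)(t,x)=0 .
$$
Since $a,b\in(x_0,x_1)$, for every fixed $t\in(t_0,t_1)$ this yields $\left.F_t\right|_{[a,b]}\equiv0$, which is the first assertion. (Throughout I use that \eqref{7.1} presupposes $\Lambda^{-2}(u^2+u_x^2/2)$ to be defined; in the situations of interest $u(t,\cdot)$ lies in a space like $H^1(\R)$, so that $u^2+u_x^2/2\in L^1(\R)$ and $\Lambda^{-2}$ acts by convolution with $g$, while {\bf C4} makes the differentiation below licit.)

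Next, using $\p_x^2\Lambda^{-2}=\Lambda^{-2}-1$, for each fixed $t$ we have
$$
F_t'(x)=\p_x^2\Lambda^{-2}\left(u^2+\f{u_x^2}{2}\right)(t,x)=f_t(x)-\left(u^2+\f{u_x^2}{2}\right)(t,x).
$$
For $x\in(a,b)\subseteq(x_0,x_1)$ and $t\in(t_0,t_1)$ one has $u(t,x)=u_x(t,x)=0$, so this reduces to $F_t'(x)=f_t(x)$ on $(a,b)$; but $F_t\equiv0$ on $[a,b]$ forces $F_t'\equiv0$ on $(a,b)$, hence $f_t(x)=0$ for all $x\in(a,b)$.

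Finally, pick any $x_\ast\in(a,b)$. Then
$$
0=f_t(x_\ast)=\Lambda^{-2}\left(u^2+\f{u_x^2}{2}\right)(t,x_\ast)=\f12\int_\R e^{-|x_\ast-y|}\left(u(t,y)^2+\f{u_x(t,y)^2}{2}\right)dy ,
$$
an integral of the non-negative continuous function $y\mapsto u(t,y)^2+u_x(t,y)^2/2$ against the strictly positive weight $e^{-|x_\ast-y|}$; hence $u(t,y)^2+u_x(t,y)^2/2\equiv0$, i.e. $u(t,\cdot)\equiv0$, and therefore $f_t\equiv0$ on all of $\R$. The core argument is short; the only point requiring care is the bookkeeping of the standing integrability/regularity that legitimizes the convolution representation of $\Lambda^{-2}$ and the differentiation under it, and I expect organizing those remarks to be the only real obstacle.
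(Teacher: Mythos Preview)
Your argument is correct. The first step, reading off $F_t\big|_{[a,b]}\equiv0$ from \eqref{7.1} on the rectangle where $u$, $u_t$, $u_x$ all vanish, is exactly Proposition~\ref{prop3.3}. For the second step the paper's own proof points to the implication $3\Rightarrow2$ of Theorem~\ref{teo1.2}, which proceeds by contradiction via the auxiliary sign function $S$ of Proposition~\ref{prop3.2}: one shows that if $u(t,\cdot)^2+u_x(t,\cdot)^2/2$ were not identically zero then $F_t(b)-F_t(a)=\tfrac12\int_\R S(y)\big(u^2+u_x^2/2\big)\,dy>0$, contradicting $F_t(a)=F_t(b)=0$. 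You instead follow the mechanism of Section~\ref{sec5}: differentiate $F_t$ via $\p_x^2\Lambda^{-2}=\Lambda^{-2}-1$ to get $f_t\equiv0$ on $(a,b)$, then use strict positivity of the kernel $e^{-|x_\ast-y|}$ at a single point $x_\ast$ to force $u(t,\cdot)\equiv0$ and hence $f_t\equiv0$ globally. Both routes are present in the paper and both are valid; yours is slightly more direct here since it avoids the auxiliary function $S$ and the contradiction setup, at the modest cost of invoking {\bf C4} to justify differentiating $F_t$.
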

\begin{proof}
Similar to Proposition \ref{prop3.3} and implication $3\Rightarrow2$ of Theorem \ref{teo1.2}. For this reason is omitted.
\end{proof}

Proposition \ref{prop7.1} has a beautiful geometrical meaning: 
Let us fix $t^\ast\in(t_0,t_1)$. The fact that the solution $u$ vanishes on the vertical segment $\{t^\ast\}\times[a,b]$ implies that $F_{t^\ast}$ vanishes on $[a,b]$ provided that $x_0<a<b<x_1$. Consequently, this forces $\Lambda^{-2}f(x)=0$, that is,
$$
\int_{\R} e^{-|x-y|}f(y)dy=0
$$
and, therefore, $f_{t^\ast}(x)=0$, for each $x\in\R$. Then, the fact that $u$ vanishes on the vertical segment $\{t^\ast\}\times[a,b]$ implies that the it vanishes on the set $\Omega_{t^\ast}:=\{(t^\ast,x);\,f_{t^\ast}(x)=0\}=\{t^\ast\}\times\R$, which is nothing but the unique vertical line containing the segment $\{t^\ast\}\times[a,b]$. Once noticing that the family $(f_t)_{t\in(t_0,t_1)}$ vanishes identically we conclude that the solution $u$ vanishes on the strip
$${\cal R}:=\bigcup_{t\in(t_0,t_1)}\Sigma_t=(t_0,t_1)\times\R$$
which is the a strip of width $t_1-t_0$ containing $\Omega$. Moreover, this strip is uniquely determined by the set $\Sigma$.

We observe that the ideas we brought from \cite{linares} enabled us to construct the strip above and show that the solution $u$ vanishes on ${\cal R}$. The question now is: Does the solution vanish outside ${\cal R}$? In order to address this question, we recall that if $\gamma=-2\omega$, then the DGH equation may have solutions that do not change their sign ({\it e.g.}, if the initial momentum of the DGH equation, with the restriction $\gamma=-2\omega$, does not change sign, then Lemma \ref{lema2.2} implies that $m$ and $u$ will have the same signs and they do not change as well). This fact, jointly with \eqref{7.3}, proves the following result:
\begin{proposition}\label{prop7.2}
If $u$ is a solution of the DGH equation that does not change and satisfies the condition ${\bf C2}$, then $\|u(t,\cdot)\|_{L^1(\R)}$ is conserved as long as the solution exists.
\end{proposition}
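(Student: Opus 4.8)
The plan is to combine two facts: the conservation of $\mathcal{H}_1(t) = \int_\R u(t,x)\,dx$ (condition {\bf C2} together with the bi-Hamiltonian structure, as in Remark \ref{rem7.2}) and the sign-definiteness of $u$ coming from Lemma \ref{lema2.2}. First I would recall the hypothesis explicitly: $u$ is a solution of the DGH equation with $\gamma = -2\omega$ that ``does not change sign'', i.e. there is $\epsilon \in \{+1,-1\}$ with $\epsilon\, u(t,x) \geq 0$ for all $(t,x)$ in the domain of existence. (That such solutions exist is precisely the content of the parenthetical remark preceding the statement: if $m_0$ has a fixed sign, then by the transport identity \eqref{2.1} the quantity $m(t,q(t,x)) + \omega + \gamma/2 = m(t,q(t,x))$ keeps that sign since $q_x(t,x)^2 > 0$, hence $m(t,\cdot)$ has a fixed sign; and since $u = \Lambda^{-2} m = g \ast m$ with $g = e^{-|x|}/2 > 0$, the function $u(t,\cdot)$ inherits the same sign.)

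The core computation is then immediate. By sign-definiteness, $|u(t,x)| = \epsilon\, u(t,x)$ pointwise, so
$$
\|u(t,\cdot)\|_{L^1(\R)} = \int_\R |u(t,x)|\,dx = \epsilon \int_\R u(t,x)\,dx = \epsilon\, \mathcal{H}_1(t).
$$
Since condition {\bf C2} (via the argument in Remark \ref{rem7.2}, applied to the conservation law \eqref{7.4}) guarantees that $\mathcal{H}_1(t)$ is independent of $t$ on the interval of existence, we conclude that $\|u(t,\cdot)\|_{L^1(\R)} = \epsilon\,\mathcal{H}_1(0) = \|u_0\|_{L^1(\R)}$ is constant in $t$, which is the assertion.

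The only genuine subtlety — and hence the main obstacle, such as it is — lies in making sure the sign of $u$ does not flip across the spatial axis $u \equiv 0$: a priori ``does not change sign'' could be read as ``$u(t,\cdot)$ has a fixed sign for each fixed $t$'' with the sign possibly depending on $t$. Here one must note that, because $u$ is continuous in $t$ (by the regularity in Lemma \ref{lema2.1}, or simply by {\bf C4}) and because $\mathcal{H}_1$ is a nonzero constant unless $u \equiv 0$, the sign $\epsilon$ cannot change with $t$; alternatively, one invokes Lemma \ref{lema2.2} directly, which shows the sign of $m(t,\cdot)$, and therefore of $u(t,\cdot)$, is determined once and for all by the sign of $m_0$. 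Either way the reduction to the displayed one-line computation is complete, and no further estimates are needed.
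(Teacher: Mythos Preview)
Your argument is correct and is precisely the one the paper intends: the sentence immediately preceding the proposition reads ``This fact, jointly with \eqref{7.3}, proves the following result,'' i.e., sign-definiteness plus conservation of $\mathcal{H}_1=\int_\R u\,dx$ gives $\|u(t,\cdot)\|_{L^1}=|\mathcal{H}_1(t)|=\text{const}$. One small labeling point: the hypothesis you actually use is the conservation of \eqref{7.3}, which in the paper's numbering is condition \textbf{C3}, not \textbf{C2}; the ``\textbf{C2}'' in the stated proposition is evidently a typo, as confirmed both by the paper's own appeal to \eqref{7.3} and by Theorem~\ref{teo7.1}, whose hypotheses list \textbf{C3} rather than \textbf{C2}.
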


We can now give a characterisation of the vanishing solutions of the DGH equation that does not change their sign.

\begin{theorem}\label{teo7.1}
Let $u$ be a solution of the DGH equation such that its sign does not change. If $u$ satisfies the conditions ${\bf C1}$, ${\bf C3}-{\bf C5}$, then $u\equiv0$.
\end{theorem}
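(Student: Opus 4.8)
The plan is to combine Proposition \ref{prop7.1}, which already exploits conditions {\bf C1}, {\bf C4} and {\bf C5}, with the extra conservation of mass furnished by conditions {\bf C3} together with the sign hypothesis. First I would invoke Proposition \ref{prop7.1}: fixing any $t^\ast\in(t_0,t_1)$ and choosing $a,b$ with $x_0<a<b<x_1$, the proposition gives $f_{t^\ast}\equiv0$ on all of $\R$, where $f_{t^\ast}(x)=\Lambda^{-2}(u^2+u_x^2/2)(t^\ast,x)$. Since $\Lambda^{-2}$ is convolution with the strictly positive kernel $e^{-|x|}/2$, the vanishing of $f_{t^\ast}$ forces $u^2+u_x^2/2\equiv0$ on the line $\{t^\ast\}\times\R$, hence $u(t^\ast,\cdot)\equiv0$. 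As $t^\ast$ ranges over $(t_0,t_1)$, we conclude $u\equiv0$ on the strip ${\cal R}=(t_0,t_1)\times\R$; in particular $\|u(t^\ast,\cdot)\|_{L^1(\R)}=0$ for every such $t^\ast$.

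Next I would use the sign hypothesis to transfer this to all times. Because the sign of $u$ does not change and {\bf C3} holds (so ${\cal H}_1(t)=\int_\R u(t,x)\,dx$ is conserved), Proposition \ref{prop7.2} applies and tells us that $\|u(t,\cdot)\|_{L^1(\R)}$ is constant in $t$ on the whole interval $[0,T)$. (If $u\geq0$ everywhere this is immediate from conservation of $\int u$; if $u\leq0$ everywhere one applies the same argument to $-u$, which is again a solution after the sign change $u\mapsto -u$, $\omega\mapsto-\omega$ — but in fact one only needs that $|u|$ integrates to $|{\cal H}_1|$.) From the previous paragraph this constant equals $0$, so $\|u(t,\cdot)\|_{L^1(\R)}=0$ for every $t\in[0,T)$, and therefore $u(t,\cdot)=0$ almost everywhere for each $t$; by the regularity of $u$ (continuity in $x$) this gives $u\equiv0$.

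The one point that needs a little care — and which I expect to be the main obstacle — is making sure Proposition \ref{prop7.2} is genuinely available: it requires that $u$ does not change sign \emph{and} that {\bf C2} holds, whereas Theorem \ref{teo7.1} only lists {\bf C3}. However, for a solution of fixed sign, conservation of ${\cal H}_1$ in {\bf C3} is exactly conservation of $\|u(t,\cdot)\|_{L^1(\R)}$ up to the sign of ${\cal H}_1$, so the $L^1$-conservation we actually use follows directly from {\bf C3} and the sign condition without invoking {\bf C2}; I would phrase that step so that it rests only on the hypotheses stated. The remaining steps are routine: positivity of the convolution kernel to pass from $f_{t^\ast}\equiv0$ to $u(t^\ast,\cdot)\equiv0$, and the elementary fact that a continuous function with zero $L^1$ norm vanishes identically. \qed
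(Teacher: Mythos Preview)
Your proposal is correct and follows essentially the same path as the paper: invoke Proposition \ref{prop7.1} to obtain $u(t^\ast,\cdot)\equiv0$ for some $t^\ast$, then use the sign hypothesis together with {\bf C3} to identify $|\mathcal{H}_1(t)|$ with $\|u(t,\cdot)\|_{L^1(\R)}$ and conclude $u\equiv0$ from the conservation of ${\cal H}_1$. Your remark that Proposition \ref{prop7.2} is stated with {\bf C2} but that only {\bf C3} is actually needed is well taken and matches how the paper's own proof proceeds (it argues directly from \eqref{7.3} rather than citing Proposition \ref{prop7.2}).
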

\begin{proof}
By Proposition \ref{prop7.1} we can find $t^\ast$ such that $u(t^\ast,x)=0$, for all $x\in\R$. If $u$ is non-negative, from \eqref{7.3} we conclude that ${\cal H}_1(t)=\|u(t,\cdot)\|_{L^1(\R)}$, while if $u$ is non-positive, we have ${\cal H}_1(t)=-\|u(t,\cdot)\|_{L^1(\R)}$. In any case, evaluating \eqref{7.3} at $t=t^\ast$ we conclude that $\|u(t^\ast,x)\|_{L^1(\R)}=0$. Since \eqref{7.3} is invariant in $t$, we conclude that $\|u(t,\cdot)\|_{L^1(\R)}$ for every $t$ for which the solution exits, which implies that $u(t,x)=0$, for all $(t,x)$.
\end{proof}

An even stronger result can be given right know:

\begin{theorem}\label{teo7.2}
Let $u$ be a solution of the DGH equation satisfying the conditions ${\bf C1}.\,{\bf C2},\,{\bf C4}$ and ${\bf C5}$. Then $u\equiv0$.
\end{theorem}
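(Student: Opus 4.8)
\textbf{Proof proposal for Theorem \ref{teo7.2}.}

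The plan is to mimic the argument of Theorem \ref{teo7.1}, but to replace the conservation of the $L^1$-norm (which required the sign of $u$ not to change) with the conservation of the $H^1$-norm, i.e.\ the Hamiltonian \eqref{7.2}, which is available under the weaker hypothesis {\bf C2}. First I would invoke Proposition \ref{prop7.1}: since $u$ satisfies {\bf C1}, {\bf C4} and {\bf C5}, for every $t\in(t_0,t_1)$ the function $f_t=\Lambda^{-2}(u^2+u_x^2/2)(t,\cdot)$ vanishes identically. Because the convolution kernel $g(x)=e^{-|x|}/2$ is strictly positive and $u^2+u_x^2/2\ge 0$ is continuous, the identity $\Lambda^{-2}(u^2+u_x^2/2)(t,x)=\int_\R \frac12 e^{-|x-y|}\bigl(u^2+u_x^2/2\bigr)(t,y)\,dy=0$ forces $u^2+u_x^2/2\equiv 0$, hence $u(t,x)=0$ for all $x\in\R$ and all $t\in(t_0,t_1)$; in particular there is some $t^\ast\in(t_0,t_1)$ with $u(t^\ast,\cdot)\equiv 0$.

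Next I would feed this into the conserved quantity \eqref{7.2}. By {\bf C2}, the functional ${\cal H}(t)=\int_\R\bigl(u(t,x)^2+u_x(t,x)^2\bigr)dx$ is constant on the interval of existence $[0,T)$. Evaluating at $t=t^\ast$ gives ${\cal H}(t^\ast)=\|u(t^\ast,\cdot)\|_{H^1(\R)}^2=0$, so ${\cal H}(t)\equiv 0$ for all $t\in[0,T)$. Consequently $\|u(t,\cdot)\|_{H^1(\R)}=0$ for every such $t$, and by the Sobolev embedding $H^1(\R)\hookrightarrow L^\infty(\R)$ recalled just before Proposition \ref{prop3.1} we conclude $u(t,x)=0$ for all $(t,x)$ in the domain, i.e.\ $u\equiv 0$.

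Two places deserve a line of care rather than a glib citation. The first is the step in Proposition \ref{prop7.1} that upgrades ``$F_t\equiv0$ on $[a,b]$'' to ``$f_t\equiv0$ on all of $\R$'': this needs $F_t\to 0$ at $\pm\infty$, which follows from $f_t\in L^1(\R)$ and the representation $F_t=\p_x\Lambda^{-2}(\cdot)$ — exactly as in the implication $3\Rightarrow2$ of Theorem \ref{teo1.2} — so it is legitimate to import it. The second, and the only genuinely delicate point, is that Proposition \ref{prop7.1} as stated presupposes $u$ solves \eqref{7.1}, i.e.\ the DGH equation with $\gamma=-2\omega$; so strictly speaking the statement of Theorem \ref{teo7.2} should be read under that normalisation (or else one must first observe, as in Section \ref{sec6}, that the relevant reductions bring the equation to the form \eqref{7.1}). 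Granting that, the rest is immediate, and the ``main obstacle'' is really just bookkeeping: checking that the hypotheses {\bf C1}, {\bf C4}, {\bf C5} of the present theorem are precisely those needed to run Proposition \ref{prop7.1}, and that {\bf C2} is exactly what makes the final $H^1$-conservation step go through without any sign assumption on $u$.
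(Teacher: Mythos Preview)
Your proposal is correct and follows exactly the route the paper indicates: the paper states that Theorem~\ref{teo7.2} ``is a consequence of propositions~\ref{prop3.1} and~\ref{prop7.1}'', and you have reproduced precisely that argument, invoking Proposition~\ref{prop7.1} to obtain $u(t^\ast,\cdot)\equiv0$ and then the $H^1$-conservation (condition~{\bf C2}, which is the content of Proposition~\ref{prop3.1}) to propagate this to all $t$. One minor aside: your parenthetical explanation that the upgrade ``$F_t\equiv0$ on $[a,b]$'' $\Rightarrow$ ``$f_t\equiv0$'' relies on $F_t\to0$ at infinity is not how the paper actually argues it (the paper uses the positivity of the kernel $S$ in Proposition~\ref{prop3.2}, or equivalently the identity $\p_x^2\Lambda^{-2}=\Lambda^{-2}-1$ as in Section~\ref{sec5}), but since you correctly cite Proposition~\ref{prop7.1} as a black box this does not affect the validity of your proof.
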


Theorem \ref{teo7.2} is a consequence of propositions \ref{prop3.1} and \ref{prop7.1} and, therefore, its proof is omitted.

We note that theorems \ref{teo7.1} and \ref{teo7.2} also have a geometrical meaning: it uses the conserved quantity to extend, for all $t$, the result known to a particular $t^\ast$, meaning that once we know the information on a vertical straight line, we can translate this property to any parallel line in which the solution is defined.

It also warrants observation that in spite of the fact that the family $(f_t)_{t\in(t_0,t_1)}$ vanishes, our demonstration of theorems \ref{teo7.1} and \ref{teo7.2} only really needs the existence of a single value $t^\ast$ for which $f_{t^\ast}$ is null. On the other hand, it is immediate from our construction that if there is one function satisfying this condition, we can then construct a continuous one-parameter family of functions having the same property.

\section{Discussion}\label{sec8}

Theorem \ref{teo1.1} implies that for any $t>0$ such that the solution of the initial value problem \eqref{1.0.2} exists (as assured by Lemma \ref{lema2.1}), if $m_0(x)+\omega+\gamma/2$ has compact support, then this property is extended  for the function $m(t,\cdot)+\omega+\gamma/2$. Differently from the DGH equation, in the CH case the function $m(t,\cdot)$ has compact support provided that $m_0$ shares the same property, see \cite[Sec.~II]{constjmp} and \cite[Proposition~1]{henry}. For the CH equation it is not possible to guarantee that the support of $m(t,\cdot)$ is the same for each $t>0$. Similarly, for the DGH equation we cannot affirm that $m(t,\cdot)+\omega+\gamma/2$ is supported on the same set for each $t>0$. On the contrary, the proof of the theorem indicates that the support may vary while $t$ changes.

Theorem \ref{teo1.2} says that if a solution of the problem \eqref{1.0.2}, with initial data $u_0\in H^4(\R)$, vanishes on a non-empty set $\Omega\subseteq[0,T)\times\R$, then it not only vanishes on its whole domain, but also the own solution is uniquely extended to any time $t>0$ to the identically vanishing solution.

The key to prove this result was to fix a certain time and use the non-locality of the convolution to prove that if a solution vanishes on a finite vertical segment on the $(t,x)-$plane, then it vanishes over the whole straight line containing such segment and, therefore, in view of the conservation of energy of the solutions of \eqref{1.0.2}, it vanishes everywhere the solution exists. 

The proof of the implication $3\Rightarrow 2$ in Theorem \ref{teo1.2} is based on \cite[Theorem~1.3]{linares}. We observe, however, that our demonstration has some nuances when compared with that aforesaid. For example, differently from \cite{linares}, we used the conservation of energy to conclude that the solution vanishes on its entire domain, as previously mentioned.

Once this fact is established, we can show that the unique compactly supported solution of \eqref{1.0.2} is $u\equiv0$. Here we assume that the initial data belongs to $H^4(\R)$. This fact is well known for the CH equation, see \cite[Sec.~II]{constjmp} and \cite[Theorem~1]{henry}, but as far as the author knows, it has not been reported to the periodic DGH equation neither to weakly dissipative versions of this equation, both in the real and periodic cases. For the real case of the DGH equation, there is a continuation result due to Zhou and Guo \cite{zhou} implying that if a solution of the (real) DGH equation is compactly supported solution, then it necessarily is the identically vanishing one. However, their approach is completely different from ours.  

Although this work is strongly influenced by the references \cite{constjmp,henry}, it is rather different of them even if directly applied to the CH equation, since we firstly proved that if the solution vanishes on some non-empty open set, then it vanishes identically and the property of having compact support is implied by this fact.

We observe that if $u\not\equiv0$ is a solution of \eqref{1.0.2} with $u_0\in H^4(\mathbb{E})$, then it cannot be compactly supported for all $t>0$. However, it might occur that $u_0\neq0$ be compactly supported and this is the unique value of $t$ such that the solution can have this property. If such situation occurs, then $u(\cdot,x)$ loses it instantly, as observed by Constantin \cite[Remark at the end of Section II]{constjmp}. On the other hand, provided that $u(t,x)\neq0$ for some $t$, then surely $u_0$ cannot vanish identically due to the conservation of energy. 

Note that we extended our results in an intrinsic way for weakly dissipative models involving the CH and the DGH equations. Our idea here is the use of the transformation discovered in Lenells and Wunsch \cite{lenjde2013} to extend the results regarding well-posedness of the CH and other related equations to their corresponding weakly dissipative counter-parts. This enables us to reduce the unique continuation property of solutions and the study of the compactly supported ones of the equation $$u_t-\al^2u_{txx}+2\omega u_x+3uu_x+\gamma u_{xxx}+\lambda(u-u_{xx})=\al^2(2u_xu_{xx}+uu_{xxx})$$
to the DGH equation.

Last, but not least, two {\it sine qua non} ingredients to prove our main results is the idea introduced in \cite{linares} in conjunction with the use of conserved quantities, which gives a new approach not only to study compactly supported solutions of the DGH equation, but also to give a different viewpoint for proving unique continuation for solutions of the DGH equation. This idea, indeed, is consistent with the Physics behind the phenomena:

\begin{itemize}
\item The conserved quantity \eqref{7.2} corresponds to the energy of the solutions of the DGH equation. In our approach, we show that if the solution vanishes on an open set, then Proposition \ref{prop7.1} says that the solution vanishes on an entire vertical straight line. From the point of view of conserved quantities, this means that for a given time $t^\ast$, the solution vanishes, which means that the energy at that time vanishes as well. Since it is conserved, then it must be zero everywhere.

\item On the other hand, the conserved quantity \eqref{7.3} correspond to the conservation of mass for \eqref{1.0.1}. Then, Proposition \ref{7.1} essentially says the if $u$ vanishes on an open set, then we can find a vertical line such that the mass vanishes and, since it is conserved, we do not have mass anywhere.
\end{itemize}

We note that the last two paragraphs also have meaning in the case of periodic solutions. The interpretation can be easily retrieved by replacing vertical segment by arc segment, and vertical line by circle. Then, the conserved quantities \eqref{7.2} or \eqref{7.3} translates to all circles in which the solution is defined the information obtained in a given circle at $t=t^\ast$. 

To conclude this section, we note that the DGH equation with $\gamma=-2\omega$ can be written in the form
\bb\label{8.1}
u_t+g+\p_x\Lambda^{-2} h=0,
\ee
where $g=uu_x+2\omega u_x$ and $h=u^2+u_x^2/2$.

The ideas presented here are applied to the class \eqref{8.1} for arbitrary $g$ and $h$, provided that:
\begin{enumerate}
    \item\label{c1} $g$ is a continuous function of $u$ and its derivatives, and does not have explicit dependence on $t$ and $x$. Moreover, we request that $g$ vanishes whenever (all of) its arguments vanish. Note that the dependence of $g$ is not specified because it depends on each equation;
    \item\label{c2} $h$ is a function satisfying the following conditions: $|h|>0$, its sign does not change, and $\p_x\Lambda^{-2}h$ is continuous, for each $t$ fixed. Likewise the function $g$, we do not specify the dependence of $h$.
    \item\label{c3} The equation must have a conserved quantity
    $${\cal H}(t)=\int_\R f dx,$$
    where the integrand $f$ is a function that does not change sign and $f=0$ only when $u(t,x)=0$. Note that eventually we may have some conditions on $u$ to ensure that $f$ satisfies the required property.
\end{enumerate}

Note that if we replace $u^2+u_x^2/2$ by $h$ in Proposition \ref{prop7.1}, we would then obtain the same conclusion. This was firstly observed in \cite{linares}. However, in this reference the authors requested that $h>0$. There is no reason to exclude the case $h<0$. What is really necessary is that $h$ is a non-vanishing function whose sign does not change. Also, \ref{c1} and \ref{c2} enable us to construct a strip ${\cal R}$ in which the solution vanishes.

Condition \ref{c3} is what makes possible the conclusion that the solution $u$ vanishes outside the strip. In fact, if for some $t^\ast$ we have $(t^\ast,\cdot)\in{\cal R}$, then ${\cal H}(t^\ast)=0$. Since ${\cal H}(\cdot)$ is invariant, we conclude that ${\cal H}(t)=0$, for all $t$. The fact that $f$ is either non-negative or non-positive yields $f\equiv0$ and since $f=0$ only when $u=0$, we are forced to conclude that $u(t,\cdot)\equiv0$, for each $t$ the solution exists.

\section{Conclusion}\label{sec9}

In this work we show that if the DGH equation has a compactly supported solution for some $t>0$, then it necessarily vanishes.

In order to do this, we bring use the ideas in \cite{linares} for unique continuation for the solutions of the Camassa-Holm with the conserved quantities of the DGH equation. As a consequence, we give an alternative approach for proving unique continuation for solutions of equations of the Camassa-Holm type. 

Finally, we extended the results of theorems \ref{teo1.1}--\ref{teo1.5} to a large class of equations of the CH and DGH type, as shown in the Section \ref{sec5}. We also explore our idea for unique continuation for solutions of the DGH equation using two different conserved quantities to show that if we can find an open set in which the solution vanishes, then it vanishes everywhere.

\section*{Acknowledgements}

The author is thankful to Dr. P. L. da Silva for the stimulating discussions about the DGH equation, which motivated the investigation of the results reported here. Also, Professor G. Ponce is thanked for having driven my attention to Reference \cite{himcmp}. I would like to thank V. H. C. Freire for the careful reading of the manuscript. Finally, CNPq is also thanked for financial support, through grant nº 404912/2016-8.

\end{document}